\documentclass[12pt]{amsart}
\usepackage[dvips]{graphicx}
\usepackage{amsmath,graphics}
\usepackage{amsfonts,amssymb}
\usepackage{xypic}
\usepackage{comment}
\specialcomment{proofc}{}{}
\includecomment{proofc}
\theoremstyle{plain}
\newtheorem*{theorem*}{Theorem}
\newtheorem*{lemma*} {Lemma}
\newtheorem*{corollary*} {Corollary}
\newtheorem*{proposition*}{Proposition}
\newtheorem*{conjecture*}{Conjecture}
\newtheorem{theorem}{Theorem}[section]
\newtheorem{lemma}[theorem]{Lemma}
\newtheorem*{theorem1*}{Theorem 1}
\newtheorem*{theorem2*}{Theorem 2}
\newtheorem*{theorem3*}{Theorem 3}
\newtheorem{corollary}[theorem]{Corollary}
\newtheorem{proposition}[theorem]{Proposition}

\theoremstyle{remark}
\newtheorem*{remark}{Remark}

\newtheorem{example*}{Example}
\newtheorem*{claim}{Claim}

\theoremstyle{definition}

\textwidth 6in    
\oddsidemargin.25in    
\evensidemargin.25in     
\marginparwidth=.85in

\def\gl{\mbox{GL}} \def\Q{\Bbb{Q}}  \def\Z{\Bbb{Z}} \def\R{\Bbb{R}} \def\C{\Bbb{C}}
\def\N{\Bbb{N}}    
 \def\a{\alpha}   \def\bp{\begin{pmatrix}}
\def\sm{\setminus} \def\ep{\end{pmatrix}} \def\bn{\begin{enumerate}} 
 \def\rank{\mbox{rank}}  \def\en{\end{enumerate}}
\def\ba{\begin{array}} \def\ea{\end{array}} 
 \def\S{\Sigma} \def\s{\sigma} \def\a{\alpha}  \def\ti{\tilde}

\def\ker{\mbox{Ker}}\def\be{\begin{equation}} \def\ee{\end{equation}} 
   
 \def\hom{\mbox{Hom}}  
   
   \def\ct{\C[t^{\pm 1}]}

\begin{document}
\title[The Thurston norm and twisted Alexander polynomials]{The Thurston norm and twisted Alexander polynomials}
\author{Stefan Friedl}
\address{Mathematisches Institut\\ Universit\"at zu K\"oln\\   Germany}
\email{sfriedl@gmail.com}

\author{Stefano Vidussi}
\address{Department of Mathematics, University of California,
Riverside, CA 92521, USA} \email{svidussi@math.ucr.edu} \thanks{S. Vidussi was partially supported by NSF grant
DMS-0906281.}
\date{\today}

\subjclass{57M27}

\begin{abstract}
 Using recent results of Agol, Przytycki-Wise and Wise we  show  that twisted Alexander polynomials detect the Thurston norm  of any  irreducible 3-manifold which is not a closed graph manifold.
\end{abstract}

\maketitle

\section{Introduction and main results}

Let $N$ be a 3-manifold.  (Throughout the paper, unless otherwise stated, we will assume that all
3-manifolds are  orientable, connected and that they have either empty or toroidal boundary.)
Given a surface $\S$ with connected components $\S_1,\dots,\S_k$  its complexity
is defined to be
\[ \chi_-(\S)=\sum_{i=1}^k \max\{-\chi(\S_i),0\}.\]
Given a 3-manifold $N$ and $\phi \in H^1(N;\Z)$ the Thurston norm is defined as
\[ x_N(\phi):= \min\{\chi_-(\S)\, |\, \S \subset N\mbox{ properly embedded and dual to }\phi\}.\]
 Thurston
\cite{Th86} showed that  $x_N$ is a seminorm on $H^1(N;\Z)$.
We  say that a class  $\phi \in H^1(N;\Z)=\hom(\pi_1(N),\Z)$ is \emph{fibered}  if  there exists a fibration $p\colon N\to S^1$ such that
$\phi=p_*:\pi_1(N)\to \Z$. We refer to Section \ref{section:thurstonnorm}
for more information on the Thurston norm and fibered classes.

Given a 3-manifold $N$, a class $\phi \in H^1(N;\Z)$, and a representation $\a\colon \pi_1(N)\to \gl(k,\C)$
we denote by $\tau(N,\phi,\a)\in \C(t)$ the corresponding twisted Reidemeister torsion, whose definition we summarize in Section \ref{section:twialex}.
This invariant was first introduced by Lin \cite{Lin01} and Wada \cite{Wa94} using slightly different normalizations. In the literature $\tau(N,\phi,\a)$ is also sometimes referred to as the twisted Alexander polynomial of $(N,\phi,\a)$.
Our approach in defining $\tau(N,\phi,\a)$ follows the point of view taken in \cite{KL99} and \cite{FK06}.

Given $p(t)\ne 0 \in \ct$ we can write $p(t)=\sum_{i=r}^sa_it^i$ with $a_r\ne 0$ and $a_s\ne 0$ and we define
\[ \deg(p(t)):=s-r.\]
We extend this definition to $\deg(0):=0$. Furthermore, given $f(t)\in \C(t)$ we can write $f(t)=\frac{p(t)}{q(t)}$ with $p(t),q(t)\in \ct$, and we define
\[ \deg(f(t)):=\max\{0,\deg(p(t))-\deg(q(t))\}.\]
Note that the degree of $f(t)$ is well--defined.

In \cite[Theorems~1.1~and~1.2]{FK06} the first author and Taehee Kim proved the following theorem:

\begin{theorem}\label{thm:fk06}
Let $N$ be a 3-manifold, $\phi \in H^1(N;\Z)$ non--zero and $\a\colon \pi_1(N)\to \gl(k,\C)$ a representation, then
\be \label{equ:xy} \frac{1}{k} \deg(\tau(N,\phi,\a))\leq x_N(\phi).\ee
Furthermore equality holds if $\phi$ is fibered.
\end{theorem}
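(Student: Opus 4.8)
The plan is to deduce both assertions from the standard relation between Reidemeister torsion and the orders of the twisted homology $\ct$-modules of the infinite cyclic cover determined by $\phi$, using for the inequality a Mayer--Vietoris argument along a Thurston-norm-minimizing surface and for the equality the classical formula for the torsion of a mapping torus. Since $x_N$ is homogeneous and the substitution $t\mapsto t^n$ multiplies $\deg$ by $n$ while carrying $\tau(N,\phi,\a)$ to $\tau(N,n\phi,\a)$, I may assume $\phi$ is primitive (fibered classes stay fibered). Let $\wti N\to N$ be the infinite cyclic cover given by $\phi$, write $H_i:=H_i(N;\ct^k)\cong H_i(\wti N;\C^k)$ (with the pulled-back $\a$-coefficients), and note $H_3=0$ since $\wti N$ is a noncompact $3$-manifold. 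If $\tau(N,\phi,\a)=0$ the inequality is vacuous, so assume $\tau\ne 0$, equivalently all $H_i$ are $\ct$-torsion, hence finite-dimensional over $\C$. The torsion--order relation then gives $\tau(N,\phi,\a)\doteq\operatorname{ord}(H_1)\operatorname{ord}(H_0)^{-1}\operatorname{ord}(H_2)^{-1}$, and since $\deg\operatorname{ord}(T)=\dim_\C T$ for finite-dimensional torsion $T$ while $\deg$ on $\C(t)$ is $\max\{0,\deg(\mathrm{num})-\deg(\mathrm{denom})\}$, it suffices to prove the single inequality $\dim_\C H_1-\dim_\C H_0-\dim_\C H_2\le k\,x_N(\phi)$.

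\textbf{The Milnor sequence.} Fix a properly embedded surface $\S$ dual to $\phi$ with $\chi_-(\S)=x_N(\phi)$ and with no $2$-sphere or disk components, and cut $N$ along $\S$ to obtain a compact $3$-manifold $M$; then $\wti N$ is assembled from $\Z$-translates of $M$ glued along translates of $\S$, and its Mayer--Vietoris sequence, organized as a sequence of $\ct$-modules, reads
\[ \cdots\to H_i(\S;\C^k)\otimes_\C\ct\xrightarrow{\,t(\iota_+)_*-(\iota_-)_*\,}H_i(M;\C^k)\otimes_\C\ct\to H_i\to H_{i-1}(\S;\C^k)\otimes_\C\ct\to\cdots, \]
where $\iota_\pm\colon\S\to M$ are the two inclusions into the boundary (coefficients everywhere restricted from $N$). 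Since every $H_i$ is torsion and a torsion submodule of a free $\ct$-module vanishes, each map in the sequence is injective; writing $d_i:=\dim_\C H_i(\S;\C^k)$, this forces $\dim_\C H_i(M;\C^k)=d_i$, realizes the $i$-th map in suitable bases as a square matrix $tB_i-C_i$ with $B_i,C_i\in M_{d_i}(\C)$, and yields $\dim_\C H_i=\deg\det(tB_i-C_i)\le d_i$. Moreover $\iota_\pm$ induce the coinvariant-quotient maps $H_0(\S;\C^k)\to H_0(M;\C^k)$, which are surjective with equal-dimensional source and target, hence isomorphisms, so $B_0,C_0$ are invertible and $\dim_\C H_0=\deg\det(tB_0-C_0)=d_0$ exactly.

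\textbf{Conclusion of the inequality, and the main obstacle.} Because $\S$ has no positive-Euler-characteristic contributions, $d_0-d_1+d_2=\chi(\S;\C^k)=k\,\chi(\S)=-k\,x_N(\phi)$, and combining with the previous estimates, $\dim_\C H_1-\dim_\C H_0-\dim_\C H_2\le d_1-d_0-\dim_\C H_2$. If $d_2=0$ (equivalently $\S$ has no closed components contributing to $H_2$) then $\dim_\C H_2=0$ and the right-hand side equals $d_1-d_0=k\,x_N(\phi)$, finishing the proof. If $d_2\ne 0$ one must in addition show $\dim_\C H_2=d_2$, i.e.\ the $H_2$-analogue of the isomorphism established above; I expect this to be the main obstacle, to be resolved via Poincar\'e--Lefschetz duality for $M$ together with the ``half lives, half dies'' principle, after which the right-hand side becomes $d_1-d_0-d_2=k\,x_N(\phi)$. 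A secondary point is the rigorous justification that $2$-sphere and disk components of a norm-minimizing $\S$ may be removed without affecting the argument.

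\textbf{The fibered equality.} If $\phi$ is fibered with (connected) fiber $\S$ and monodromy $h$, then $\wti N$ deformation retracts onto $\S$, so $H_i\cong H_i(\S;\C^k)$ with $t$ acting by the ($\a$-twisted) monodromy induced by $h$; Milnor's formula for the torsion of a mapping torus then gives $\tau(N,\phi,\a)\doteq\prod_i\det\bigl(t\cdot\mathrm{id}-h_*\mid H_i(\S;\C^k)\bigr)^{(-1)^{i+1}}$, whence $\dim_\C H_1-\dim_\C H_0-\dim_\C H_2=-\chi(\S;\C^k)=-k\,\chi(\S)$ and therefore $\deg\tau(N,\phi,\a)=\max\{0,-k\,\chi(\S)\}=k\,\chi_-(\S)=k\,x_N(\phi)$, the last equality because the fiber $\S$ is Thurston-norm-minimizing by a theorem of Thurston.
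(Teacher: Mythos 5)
First, a structural remark: this paper does not prove Theorem \ref{thm:fk06} itself; it cites \cite[Theorems~1.1~and~1.2]{FK06}, so the ``paper's own proof'' is the Friedl--Kim argument, which does indeed proceed by cutting along a norm--minimizing surface and analyzing the resulting Mayer--Vietoris (Milnor/Wang) sequence over $\ct$, exactly along the lines you describe. Your reduction $\deg\tau(N,\phi,\a)=\dim_\C H_1-\dim_\C H_0-\dim_\C H_2$, the observation that torsionness of each $H_i$ forces the maps $tB_i-C_i$ to be injective square matrices, and the $H_0$ analysis (the coinvariant maps $B_0,C_0$ are isomorphisms, so $\deg\det(tB_0-C_0)=d_0$ exactly) are all correct and correspond to the standard argument. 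The fibered case via Milnor's torsion formula for the mapping torus is also fine, including the appeal to Thurston's theorem that fibers minimize the norm.

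The place where the proposal is not yet a proof is exactly where you flag it: you need $\dim_\C H_2=d_2$, i.e.\ $\deg\det(tB_2-C_2)=d_2$, equivalently that $B_2=(\iota_+)_*\colon H_2(\S;\C^k)\to H_2(M;\C^k)$ is an isomorphism. Your general argument only yields $\dim_\C H_2\le d_2$, which goes the wrong way: with $\dim_\C H_1\le d_1$, $\dim_\C H_0=d_0$, and $\dim_\C H_2\le d_2$, the quantity $\dim_\C H_1-\dim_\C H_0-\dim_\C H_2$ is only bounded above by $d_1-d_0-d_2+(d_2-\dim_\C H_2)$, which can exceed $k\,x_N(\phi)$. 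So a genuine additional input is required, and it is not automatic: unlike $H_0$, where coinvariants make surjectivity of $\iota_{\pm,*}$ formal, for $H_2$ one has $H_2(\S;\C^k)\cong\bigoplus_{\S_j\text{ closed}}(\C^k)^{\pi_1(\S_j)}$ and must show the corresponding classes survive into $H_2(M;\C^k)$. This is precisely where \cite{FK06} has to work, and the Poincar\'e--Lefschetz duality for the cobordism $(M;\S_-,\S_+)$ that you gesture at is the right idea, but it needs to be carried out (in particular one must handle the dual representation $\a^*$ carefully, since $\a$ is only assumed to be a $\gl(k,\C)$-representation, not unitary). The secondary points you note --- removing sphere and disk components, and the reduction to primitive $\phi$ (where one must check that $\tau(N,n\psi,\a)(t)\doteq\tau(N,\psi,\a)(t^n)$ and that the ``infinite cyclic cover'' associated to a non-primitive class has the expected relation to that of the primitive class) --- are real but routine; the $H_2$ step is the one that still needs an argument.
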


 It is a natural question to ask whether there exists a representation such that (\ref{equ:xy}) becomes an equality for all $\phi$.
Computational evidence towards an affirmative answer was given in \cite{FK06,FK08} and \cite{DFJ11}.
Using recent work of Agol \cite{Ag08,Ag12}, Przytycki-Wise \cite{PW12} and Wise \cite{Wi12} (see Section \ref{section:apw} for details)
we can now prove that that this is indeed the case for most 3-manifolds:

\begin{theorem} \label{main} \label{mainthm} Let $N$ be an irreducible 3-manifold which is not a closed graph manifold.
Then there exists a unitary representation $\a\colon \pi_1(N)\to U(k)$ such that
\[ \frac{1}{k} \deg(\tau(N,\phi,\a))=x_N(\phi) \mbox{ for any }\phi\in H^1(N;\Z)\sm \{0\}.\]
Furthermore $\a$ can be chosen to factor through a finite group.
  \end{theorem}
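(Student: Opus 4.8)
The plan is to reduce the statement to a finite-cover/RFRS-type argument combined with Theorem \ref{thm:fk06}. The key input from Agol, Przytycki--Wise and Wise is that the fundamental group of an irreducible $3$-manifold which is not a closed graph manifold is virtually special, hence in particular virtually RFRS; by Agol's fibering theorem this means $N$ has a finite cover $\wti N\to N$ which is \emph{virtually fibered} and, more strongly, every $\phi\in H^1(\wti N;\Z)$ lies in the closure of the cone over a fibered face of the Thurston norm ball. From this one extracts the statement that $N$ has a single finite regular cover $\wti N$, with deck group $G$, such that the pullback of every integral cohomology class of $N$ becomes (up to the usual cone/limit argument, and after passing to the relevant sub-multiple) a class that is detected by fibrations on further finite covers. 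The representation $\a$ we want will be the one induced by the regular representation of $G$, i.e.\ $\a=\pi_1(N)\to G\to \aut(\C[G])=\gl(k,\C)$ with $k=|G|$, which is manifestly unitary and factors through the finite group $G$.

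First I would recall the Thurston-norm behaviour under finite covers: if $p\colon \wti N\to N$ is a finite cover of degree $d$ then for any $\phi\in H^1(N;\Z)$ one has $x_{\wti N}(p^*\phi)=d\cdot x_N(\phi)$ when the cover is chosen compatibly (this uses Gabai's theorem that the Thurston norm is detected by foliations/taut surfaces and is multiplicative on the relevant covers — for general covers one only has an inequality, so care is needed and one restricts to covers coming from the RFRS tower). Second, I would use the Shapiro-lemma-type identity for twisted Reidemeister torsion: if $\a$ is the representation of $\pi_1(N)$ induced from the trivial representation of $\pi_1(\wti N)$ (equivalently $\C[G]$ as above), then $\tau(N,\phi,\a)$ equals $\tau(\wti N,p^*\phi,1)$, the ordinary Reidemeister torsion / Alexander polynomial of the cover, up to units; consequently $\deg\tau(N,\phi,\a)=\deg\tau(\wti N,p^*\phi,1)$. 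Combining these, $\frac1k\deg\tau(N,\phi,\a)=\frac1d\deg\tau(\wti N,p^*\phi,1)$, and so it suffices to prove $\deg\tau(\wti N,p^*\phi,1)=x_{\wti N}(p^*\phi)$ for all $\phi$, i.e.\ that the ordinary (untwisted) Alexander polynomial of $\wti N$ detects the Thurston norm of all pulled-back classes.

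The third and central step is therefore: choose the cover $\wti N$ so that the untwisted Alexander polynomial detects the norm of every class of the form $p^*\phi$. Here is where the RFRS/virtually-fibered technology does the work. By Agol, after passing to a further finite cover we may assume $\pi_1(\wti N)$ is RFRS; then for each $\phi$ the class $p^*\phi$ is a limit of fibered classes on finite covers of $\wti N$, and by Theorem \ref{thm:fk06} the degree of the twisted torsion equals the norm for fibered classes, together with the lower bound (\ref{equ:xy}) for all classes and a continuity/linearity argument in $\phi$ on the fibered cone, this forces equality $\deg\tau(\wti N,p^*\phi,1)=x_{\wti N}(p^*\phi)$ on the nose. (One must be slightly careful: the statement one really proves is that \emph{some} finite cover detects each $\phi$; then one takes a common finite cover over the finitely many faces of the Thurston norm ball of $N$ — the norm ball is a finite polytope, so finitely many covers suffice — and lets $\wti N$ be the resulting single regular cover, enlarging $G$ accordingly.)

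The main obstacle I expect is precisely the behaviour of the Thurston norm and of the twisted torsion under non-normal or badly-chosen finite covers: the multiplicativity $x_{\wti N}(p^*\phi)=d\, x_N(\phi)$ is false for arbitrary covers, so one needs to invoke Gabai's norm-detection by taut surfaces (and the fact that a taut surface stays taut in any cover, giving $x_{\wti N}(p^*\phi)\le d\,x_N(\phi)$, while the reverse is the subtle direction), and one needs to arrange that the single cover $\wti N$ simultaneously witnesses fiberedness for classes lying in \emph{every} face of the norm ball of $N$. Marshalling the Agol--Wise results to produce one finite regular cover with all these properties at once — rather than a different cover for each class or each face — is the crux; the torsion bookkeeping (Shapiro's lemma, degree additivity, reduction to the untwisted case) is then essentially formal, as is the final deduction of equality from Theorem \ref{thm:fk06} via the fibered-cone/limit argument.
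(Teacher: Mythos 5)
Your high-level strategy matches the paper's: use Agol/Przytycki--Wise/Wise to get virtual RFRS, apply Agol's fibering theorem to produce a single regular finite cover $p\colon M\to N$ on which every pulled-back class is quasi-fibered, and then transport the result back down via Shapiro's lemma. Two points in your writeup, however, need to be addressed, one minor and one a genuine gap.

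The minor point: you worry that $x_M(p^*\phi)=d\cdot x_N(\phi)$ might fail for general covers and restrict attention to RFRS-tower covers. This caution is unnecessary --- Gabai (\cite[Corollary~6.18]{Ga83}) proves exact multiplicativity of the Thurston norm for \emph{all} finite covers, with no hypotheses on the cover. The paper simply cites this and moves on.

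The real gap is in your step three, where you take the representation to be the one induced from the \emph{trivial} character on $\pi_1(M)$, i.e.\ the regular representation of the deck group, and claim that $\deg\tau(M,p^*\phi,1)=x_M(p^*\phi)$ for all $\phi$. This does not follow from quasi-fiberedness plus continuity. What the quasi-fibered/continuity argument (the paper's Theorem~\ref{thm:fk08}) gives you is $y_M(p^*\phi)=x_M(p^*\phi)$, an equality of the Alexander \emph{norm} with the Thurston norm. But the degree of the Alexander polynomial in the $\phi$-direction is only \emph{generically} equal to $y_M(\phi)$: as shown in Lemma~\ref{lem:taunorm}, $\deg\tau(M,\phi,1)\le y_M(\phi)$, with equality failing precisely when two monomials of the multivariable Alexander polynomial that lie on the top (or bottom) face of the Newton polytope are identified under $\phi$ and their coefficients cancel. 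Continuity of $\phi\mapsto y_M(\phi)$ is no help, because $\phi\mapsto\deg\tau(M,\phi,1)$ is not continuous --- it can jump down on the bad hyperplanes. This is exactly the difficulty the paper's Proposition~\ref{prop:makeequal} is designed to overcome: one needs to twist by a carefully chosen \emph{nontrivial} character $\rho\colon\pi_1(M)\to U(1)$ (found via Lemma~\ref{lem:nonzerochar} by making all face polynomials of the Newton polytope of $\Delta_M$ simultaneously nonvanishing after evaluation at $\rho$) so that the cancellation never occurs and $\deg\tau(M,\phi,\rho)=y_M^\rho(\phi)=y_M(\phi)$ holds for \emph{every} $\phi$, not just generic ones. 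The representation $\a$ on $\pi_1(N)$ is then the extended finite character induced from this $\rho$, not the regular representation. Without this twist your argument only proves the equality outside finitely many hyperplanes in $H^1(N;\Q)$, which is strictly weaker than the theorem.
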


\begin{remark}
\bn
\item
In Corollary \ref{cor:twinorm} we will prove a closely related result which shows that under the same assumption on $N$ there exists a unitary representation such that the
corresponding twisted Alexander norm introduced
in \cite{FK08} (which generalizes work of McMullen \cite{Mc02} and Turaev \cite{Tu02a}) also detects the Thurston norm.
\item
It was shown by many authors (see \cite{Ch03,GM03,GKM05,FK06,Pa07,Kiy08,FV10}), at different levels of generality,  that twisted Alexander polynomials give obstructions
to a 3-manifold being fibered. In \cite{FV11a} (see also \cite{FV11b}) the authors showed that in fact twisted Alexander polynomials
detect fibered 3-manifolds. The fiberedness criterion of \cite{FV11a} can be greatly strengthened using the recent work of Wise.
We refer to \cite{FV11c} for details.
\item  Some of the ideas of this paper were also used by the authors in \cite{FV11d} to study genus minimizing surfaces in $S^1$--bundles over closed 3-manifolds.
\en
\end{remark}

We conclude with a quick summary of two applications to our results:
\bn
\item Let $N$ be an irreducible 3-manifold which is not a closed graph manifold. Theorem \ref{mainthm} gives a new algorithm for  determining the Thurston norm.
We refer to Section \ref{section:appl}  for details.
\item In \cite{FSW12} we will use Theorem \ref{mainthm} to study `splittings of knot groups
along minimal edge groups' and we will furthermore use Theorem \ref{mainthm} to relate the Thurston norm to the Turaev norm (see \cite{Tu02b}) on the first cohomology of a 2--complex.
\en

\subsection*{Convention.} Unless specified otherwise, all groups are assumed to be finitely generated.
Furthermore we allow norms to be degenerate, i.e. we refer to seminorms as norms.
By a free abelian group we always mean a non--trivial free abelian group.

\subsection*{Acknowledgment.} We are grateful to Genevieve Walsh for a helpful conversation.

\section{The Thurston norm}\label{section:thurstonnorm}

Let $N$ be a 3-manifold and let $\phi\in H^1(N;\Z)$.
It is well--known that  any class in $H^1(N;\Z)$ is dual to a properly embedded surface.
Now recall that the   Thurston norm of
$\phi$ is defined as
\[ x_N(\phi):= \min\{\chi_-(\S)\, |\, \S \subset N\mbox{ properly embedded and dual to }\phi\}.\]
 Thurston
\cite{Th86} showed that  $x_N$ is a seminorm on $H^1(N;\Z)$ which thus
can be extended to a seminorm on $H^1(N;\R)$ which we also denote by $x_N$. Thurston furthermore showed that  the Thurston norm ball
\[ B(N):=\{ \phi \in H^1(N;\R)\, | \, x_N(\phi)\leq 1 \}\]
 is a (possibly non--compact) finite convex polytope.

We also recall that  an integral class $\phi \in H^1(N;\Z)=\hom(\pi_1(N),\Z)$ is called fibered  if  there exists a fibration $p\colon N\to S^1$ such that
$\phi=p_*:\pi_1(N)\to \Z$. More generally, we say that a class $\phi \in H^1(N;\R)$ is  \emph{fibered}  if $\phi$ can be represented by a nowhere vanishing closed 1--form. It is well--known (see e.g. \cite{Ti70}) that for integral classes the two notions of being fibered coincide.

Thurston \cite{Th86} showed that
there exist open top--dimensional faces $F_1,\dots,F_r$ of $B(N)$
such that the set of fibered classes in $H^1(N;\R)$
equals the union of the open cones on $F_1,\dots,F_r$.
The faces $F_1,\dots,F_r$ are referred to as \emph{fibered faces} of $B(N)$.

Finally let $p\colon M\to N$ be a finite cover of degree $k$ and let $\phi\in H^1(N;\R)$.
Then
\be \label{equ:tnfinitecover} x_{M}(p^*\phi)=k\cdot x_N(\phi),\ee
furthermore
$ \phi$ is fibered if and only if $p^*\phi$ is fibered.
We refer to  \cite[Corollary~6.18]{Ga83} for a proof of (\ref{equ:tnfinitecover}), while
the statement regarding finite covers can be proved using Stallings' theorem
\cite{St62}.

\section{Definition and basic properties of twisted Alexander polynomials} \label{section:definition}

\subsection{Twisted Alexander polynomials}\label{section:twialex}

Let $N$ be a 3-manifold and let  $\a\colon \pi_1(N)\to \gl(k,\C)$ be a representation. Furthermore let $\psi\colon \pi_1(N)\to F$ be a rationally surjective  homomorphism to a free abelian group $F$. (Here by rationally surjective we mean that $\psi$ has finite cokernel.)
 We get a tensor representation
\[ \ba{rcl} \a\otimes \psi\colon  \pi_1(N) &\to & \gl(k,\C[F]) \\
g&\mapsto & \a(g)\cdot \psi(g).\ea \]
We denote the universal cover of $N$ by $\ti{N}$.
Note that there exists a canonical left $\pi_1(N)$--action on the universal cover $\ti{N}$ given by deck transformations. We
consider the cellular chain complex $C_*(\ti{N};\Z)$ as a right $\Z[\pi_1(N)]$-module by defining $\s \cdot
g:=g^{-1}\s$ for a chain $\s$. The representation $\a\otimes \psi$  gives rise to  a left action of $\pi_1(N)$ on $\C[F]^k$. We can therefore consider
the  $\C[F]$--complex
\[ C_*^{\psi\otimes \a}(N;\C[F]^k):=C_*(\tilde{N};\Z)\otimes_{\Z[\pi_1(N)]}\C[F]^k.\]
and its homology modules
$$
H_i^{\psi\otimes \a}(N;\C[F]^k) := H_i(C_*(\tilde{N};\Z)\otimes_{\Z[\pi_1(N)]}\C[F]^k).
$$
 Since $N$ is compact and since $\C[F]$
is Noetherian these modules are finitely presented over $\C[F]$.
We now define the \emph{$i$--th twisted Alexander polynomial of $(N,\psi,\a)$} to be the order of $H_i(N;\C[F]^k)$ (see \cite{FV10} and \cite{Tu01} for details). We will denote it as $\Delta_{N,\psi,i}^\a\in \C[F]$.
Throughout this paper we often write
 $\Delta^\a_{N,\psi}$ instead of  $\Delta^{\a}_{N,\psi,1}$.

Note that $\Delta_{N,\psi,i}^\a\in \C[F]$  is well-defined up to multiplication by a unit in $\C[F]$,
i.e. up to an element  of the form $rf$ with $r\in \C\sm\{0\}$ and $f\in F$.
In the following, we denote by $\C(F)$ the quotient field of $\C[F]$, and given $p,q\in \C(F)$ we write
\[ p\doteq q \]
if $p$ and $q$ agree up to multiplication by  an element  of the form $rf$ with $r\in \C\sm\{0\}$ and $f\in F$.

\subsection{Definition of $\tau(N,\psi,\a)$}

The following is a mild extension of \cite[Proposition~2.5]{FK06} and \cite[Lemmas~6.2~and~6.3]{FK08}.
Most of the ideas go back to work of Turaev (cf. e.g. \cite{Tu86} and \cite{Tu01}).

\begin{proposition}\label{prop:deltatau}\label{prop:taudelta}
Let $N$ be a 3-manifold,  $\psi\colon \pi_1(N)\to F$ a rationally surjective  homomorphism to a free abelian group $F$
and $\a\colon \pi_1(N)\to \gl(k,\C)$  a  representation. Then the following hold:
\bn
\item $\Delta_{N,\psi,0}^{\a } \ne 0$.
\item If $\Delta_{N,\psi,1}^{\a } \ne 0$, then $\Delta_{N,\psi,2}^{\a } \ne 0$.
\item If $\rank(F)>1$, then $\Delta^{{\a }}_{N,\psi,0} \doteq 1$.
\item If  $\rank(F)>1$ and if $\Delta_{N,\psi,1}^{\a } \ne 0$, then $\Delta^{{\a }}_{N,\psi,2}\doteq 1$.
\en
\end{proposition}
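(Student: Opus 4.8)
The plan is to phrase everything in terms of the modules $H_i:=H_i^{\psi\otimes\a}(N;\C[F]^k)$ over the ring $R:=\C[F]$, which is a regular Noetherian UFD (hence Cohen--Macaulay) of Krull dimension $\rank(F)$. Recall that $\Delta^{\a}_{N,\psi,i}\ne 0$ if and only if $H_i$ is $R$--torsion, and that if $\operatorname{supp}(H_i)$ has codimension $\ge 2$ (equivalently, if the zeroth Fitting ideal of $H_i$ is contained in no height--one prime) then $H_i$ is $R$--torsion with $\Delta^{\a}_{N,\psi,i}\doteq 1$. The computational heart of the argument is the following claim: \emph{for any homomorphism $\psi'\colon\pi_1(N)\to F$ with finite cokernel and any representation $\b\colon\pi_1(N)\to\gl(\ell,\C)$, the module $H_0^{\psi'\otimes\b}(N;\C[F]^\ell)$ is finite--dimensional over $\C$.} To prove this I would compute $H_0^{\psi'\otimes\b}(N;\C[F]^\ell)=(\C[F]^\ell)_{\pi_1(N)}$ as an iterated coinvariant module: taking coinvariants first under $\ker(\psi')$, on which the $\psi'$--factor acts trivially, yields $\C[F]\otimes_\C W$ with $W:=(\C^\ell)_{\b(\ker(\psi'))}$ finite--dimensional; taking coinvariants of this under $\pi_1(N)/\ker(\psi')\cong\im(\psi')$, the defining relations force multiplication by each basis element of $\im(\psi')$ to act on the quotient as a fixed invertible $\C$--linear operator (induced by the inverse of the value of $\b$ on a lift), so the quotient is $\C[F]\otimes_{\C[\im(\psi')]}W^{\flat}$ for a suitable $\C[\im(\psi')]$--module structure $W^{\flat}$ on $W$; since $[F:\im(\psi')]<\infty$ this is finite--dimensional over $\C$.

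Parts (1) and (3) are then immediate from the claim applied to $\psi'=\psi$, $\b=\a$: a module finite--dimensional over $\C$ has proper closed support, hence is $R$--torsion, so $\Delta^{\a}_{N,\psi,0}\ne 0$; and if $\rank(F)\ge 2$ its support is a finite set of closed points, of codimension $\rank(F)\ge 2$, so $\Delta^{\a}_{N,\psi,0}\doteq 1$. For part (2) I would use that $\chi(N)=0$ (either $N$ is closed or $\partial N$ is a union of tori), so that, since $H_0$ is $R$--torsion by part (1) and since $H_3=0$, the Euler characteristic relation over $\C(F)$ forces $H_1$ and $H_2$ to have the same $R$--rank. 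The vanishing $H_3=0$ is seen as follows: if $\partial N\ne\emptyset$ then $N$ is homotopy equivalent to a $2$--complex and the chain complex has no degree--$3$ term, while if $N$ is closed then Poincar\'e duality gives $H_3\cong H^0(N;\C[F]^k)=(\C[F]^k)^{\pi_1(N)}$, and a nonzero invariant would be a common eigenvector of the complex matrices $\a(g)$ with eigenvalue $\psi(g)^{-1}$, which is impossible since a non--constant Laurent monomial is not an eigenvalue of a matrix over $\C$. Consequently, if $\Delta^{\a}_{N,\psi,1}\ne 0$, i.e.\ $H_1$ is $R$--torsion, then $H_2$ is $R$--torsion, i.e.\ $\Delta^{\a}_{N,\psi,2}\ne 0$.

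For part (4) I would distinguish two cases. If $\partial N\ne\emptyset$, then $N$ is homotopy equivalent to a $2$--complex, so $H_2=\ker(\partial_2)$ is a submodule of a free $R$--module and therefore $R$--torsion--free; combined with being $R$--torsion (by the computation in part (2)) it vanishes, so $\Delta^{\a}_{N,\psi,2}\doteq 1$ --- no hypothesis on $\rank(F)$ is needed here. If $N$ is closed, I would use Poincar\'e duality $H_2\cong H^1(N;\C[F]^k)$ together with the universal--coefficient spectral sequence $E_2^{p,q}=\Ext^p_R\big(H_q^{\ol{\psi\otimes\a}}(N;\C[F]^k),R\big)\Rightarrow H^{p+q}(N;\C[F]^k)$, where $\ol{\psi\otimes\a}\colon g\mapsto\big((\psi\otimes\a)(g^{-1})\big)^T$ is the dual representation, which is again of the form $\b\otimes\psi'$ with $\psi'$ of finite cokernel. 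On the antidiagonal $p+q=1$ there are only two potentially nonzero terms. The term $E_2^{1,0}=\Ext^1_R(H_0^{\ol{\psi\otimes\a}}(N;\C[F]^k),R)$ vanishes: by the claim, $H_0^{\ol{\psi\otimes\a}}(N;\C[F]^k)$ is finite--dimensional over $\C$, hence --- using $\rank(F)\ge 2$ --- supported in codimension $\ge 2$, and over a Cohen--Macaulay ring one has $\Ext^0_R(T,R)=\Ext^1_R(T,R)=0$ for every finitely generated $T$ supported in codimension $\ge 2$. The term $E_2^{0,1}=\Hom_R(H_1^{\ol{\psi\otimes\a}}(N;\C[F]^k),R)$ is $R$--torsion--free, whereas the associated graded piece $E_\infty^{0,1}$ is a subquotient of $H^1\cong H_2$, which is $R$--torsion once $\Delta^{\a}_{N,\psi,1}\ne 0$ (by part (2)); hence $E_\infty^{0,1}=0$. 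Therefore $H^1(N;\C[F]^k)=0$, so $H_2=0$ and $\Delta^{\a}_{N,\psi,2}\doteq 1$.

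I expect the main obstacle to be the closed case of part (4): keeping the bookkeeping straight under Poincar\'e duality and the universal--coefficient spectral sequence --- in particular verifying that the representation obtained by dualizing is still of the form $\b\otimes\psi'$, so that the $H_0$--computation applies to it --- and correctly invoking the commutative--algebra input that $\Ext^0_R$ and $\Ext^1_R$ into $R$ vanish on modules supported in codimension $\ge 2$, which is precisely the place where the hypothesis $\rank(F)>1$ enters. The $H_0$--computation underlying parts (1)--(3), and the $2$--complex argument for the bounded case of part (4), should by comparison be routine.
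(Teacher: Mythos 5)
Your proposal is correct and follows essentially the same route as the sources the paper defers to for this proposition ([FK06, Prop.~2.5], [FK08, Lemmas~6.2--6.3], building on Turaev): the coinvariant computation showing $H_0$ is finite-dimensional over $\C$, the Euler-characteristic argument for part~(2), a $2$-dimensional spine in the bounded case, and Poincar\'e duality plus the universal-coefficient spectral sequence (with the codimension-$\ge 2$ input from $\rank(F)>1$) in the closed case of part~(4). One small point to tighten: to conclude $E_\infty^{0,1}=0$ from ``torsion-free meets torsion'' you need $E_\infty^{0,1}$ to be a \emph{submodule} of $E_2^{0,1}$, not merely a subquotient --- which it indeed is here, since every differential $d_r$ entering the $(0,1)$ spot originates in negative filtration degree and hence vanishes.
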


 If $\Delta_{N,\psi,1}^{\a }\ne 0$, then we define
\[ \tau(N,\psi,{\a } ):=\prod_{i=0}^2 \big(\Delta^{{\a }}_{N,\psi,i}\big)^{(-1)^{i+1}} \in \C(F).\]
If   $\Delta_{N,\psi,1}^{\a }= 0$, then we define  $\tau(N,\psi,\a):=0$.

\begin{remark}
\bn
\item Note that it is an immediate consequence of Proposition \ref{prop:deltatau} that $\tau(N,{ \psi},\a )$ lies in $\C[F]$,
if $\rank(F)>1$.
\item Throughout this paper we identify the complex group ring of $\Z$ with $\ct$, i.e. we identify $\ct=\C[\Z]$ and $\C(t)=\C(\Z)$.
In particular if $\psi\colon \pi_1(N)\to \Z$ is rationally surjective, then we view $\tau(N,\psi,\a)$ as an element in $\C(t)$.
\item
The invariant $\tau(N,\psi,\a)$ can be viewed as a twisted Reidemeister torsion.
We refer to \cite{Mi66,Tu01,Nic03} for background on Reidemeister torsion,
and we refer to \cite{Kio96,KL99,FV10} and   \cite[Theorem~6.7]{FK08} for twisted Reidemeister torsion and its relation to twisted Alexander polynomials. We will not make use of this point of view.
\item We drop $\psi$ from the notation if $\psi$ is the projection $\pi_1(N)\to H_1(N;\Z)/\mbox{torsion}$, furthermore we drop $\a$ from the notation if $\a$ is the trivial one--dimensional representation.
\en
\end{remark}

\subsection{Tensoring with one-dimensional representations}

Let $N$ be a 3-manifold, let
$\psi\colon \pi_1(N)\to F$ be a rationally surjective  homomorphism to a free abelian group $F$
and let $\rho\colon F\to U(1)$ be a character.
We will denote the character $\rho\circ \psi\colon \pi_1(N)\to U(1)$ by $\rho$ as well.
Note that $\rho$  gives rise to a ring automorphism $\C[F]\to \C[F]$ induced by $f\mapsto \rho(f) \cdot f, f\in F$.
We will denote this ring automorphism by $\rho$ as well.

The following lemma is now a straightforward  consequence of the definitions:

\begin{lemma}\label{lem:twist}
For any $i$ we have
\[ \Delta_{N,\psi,i}^{\rho} = \rho(\Delta_{N,\psi,i})\in \C[F].\]
\end{lemma}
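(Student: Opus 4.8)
The plan is to compare the two twisted chain complexes at the level of $\C[F]$-modules and exhibit an explicit isomorphism between them which is compatible with the ring automorphism $\rho\colon\C[F]\to\C[F]$. Concretely, unwinding the definitions in Section \ref{section:twialex}, the complex computing $\Delta_{N,\psi,i}$ is $C_*(\ti N;\Z)\otimes_{\Z[\pi_1(N)]}\C[F]^k$ where $\pi_1(N)$ acts on $\C[F]^k$ via $\a\otimes\psi$, i.e. $g$ acts as $\a(g)\cdot\psi(g)$; the complex computing $\Delta_{N,\psi,i}^{\rho}$ is the same but with $\pi_1(N)$ acting via $(\rho\cdot\a)\otimes\psi$, i.e. $g$ acts as $\rho(g)\a(g)\cdot\psi(g)=\a(g)\cdot(\rho(g)\psi(g))$. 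Thus the two actions differ exactly by postcomposing the $F$-part with the substitution $f\mapsto\rho(f)f$, which is precisely the ring automorphism also called $\rho$.

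First I would fix a CW-structure on $N$, lift it to $\ti N$, and pick $\pi_1(N)$-equivariant bases of the $C_i(\ti N;\Z)$, so that each boundary map of $C_*(\ti N;\Z)\otimes_{\Z[\pi_1(N)]}\C[F]^k$ becomes an explicit matrix over $\C[F]$, whose entries are obtained by applying $\a\otimes\psi$ (extended $\C$-linearly to $\Z[\pi_1(N)]$) to certain group-ring elements. Applying the automorphism $\rho$ of $\C[F]$ entrywise to these boundary matrices, I would observe — using $\rho((\a\otimes\psi)(g))=\rho(g)(\a\otimes\psi)(g)=((\rho\cdot\a)\otimes\psi)(g)$ and $\C$-linearity — that one obtains exactly the boundary matrices of the complex $C_*(\ti N;\Z)\otimes_{\Z[\pi_1(N)]}\C[F]^k$ for the representation $\rho\cdot\a$. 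In other words, the chain complex for $\rho\cdot\a$ is the image under $\rho$ (applied coefficientwise) of the chain complex for $\a$. Since $\rho$ is a ring isomorphism of $\C[F]$, it carries a finite free presentation of $H_i$ for the $\a$-complex to a finite free presentation of $H_i$ for the $\rho\cdot\a$-complex, and the order of a module transforms under a ring automorphism by that automorphism (a determinant of a presentation matrix is sent to the $\rho$-image of that determinant). Hence $\Delta_{N,\psi,i}^{\rho}\doteq\rho(\Delta_{N,\psi,i})$, and since $\rho$ sends the unit $f\in F\subset\C[F]$ to the unit $\rho(f)f$ the indeterminacy matches up, giving the claimed equality in $\C[F]$.

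The only genuinely delicate point is bookkeeping: one must be careful that the right $\Z[\pi_1(N)]$-module structure on $C_*(\ti N;\Z)$ (defined via $\s\cdot g=g^{-1}\s$) and the left action on $\C[F]^k$ interact so that the matrix entries really are $\C$-linear combinations of elements $(\a\otimes\psi)(g)$, and that $\rho$ commutes with taking these combinations — but this is exactly the content of $\rho$ being a $\C$-algebra automorphism, so no new idea is needed. I expect this to be the routine "main obstacle" the authors allude to when they call the lemma "a straightforward consequence of the definitions": the substance is the identity $\rho\circ(\a\otimes\psi)=(\rho\cdot\a)\otimes\psi$ of maps $\Z[\pi_1(N)]\to M_k(\C[F])$, together with the general fact that orders of modules are natural under ring automorphisms.
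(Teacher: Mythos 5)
Your proof is correct, and it agrees with the paper's treatment: the paper supplies no written proof of Lemma~\ref{lem:twist}, asserting only that it is a straightforward consequence of the definitions, and your argument is precisely the careful unwinding of those definitions that such an assertion presupposes. The two points you isolate---the identity $\rho\bigl((\a\otimes\psi)(g)\bigr)=((\rho\cdot\a)\otimes\psi)(g)$ in $\gl(k,\C[F])$, and the fact that a ring automorphism of $\C[F]$ transports the order of a finitely presented module by acting entrywise on a presentation matrix---are the whole content, and the indeterminacy $\doteq$ is respected because $\rho$ maps units $rf$ to units of the same form.
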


\subsection{Change of variables}

Let $N$ be a 3-manifold. We write $F=H_1(N;\Z)/\mbox{torsion}$.
Let $\a\colon \pi_1(N)\to \gl(k,\C)$ be a  representation.
Furthermore let $\phi\in H^1(N;\Z)=\hom(F,\Z)$.
We denote the induced ring homomorphism $\C[H]\to \C[\Z]=\ct$ by $\phi$ as well.
Let
\[ S=\{ f \in \C[F] \, |\, \varphi(\C[F])\ne 0 \in \ct\}.\]
Note that $\phi$ induces a homomorphism $\C[F]S^{-1}\to \C(t)$ which we also denote by $\phi$.

The following is  a slight generalization of \cite[Theorem~6.6]{FK08}, which in turn builds on ideas of Turaev
(cf. \cite{Tu86} and \cite{Tu01}).

\begin{proposition}\label{prop:changevartau}
We have  $\tau(N,\a )\in \C[F]S^{-1}$, and
\[ \tau(N, \phi,\a) \doteq \phi(\tau(N,\a)).\]
\end{proposition}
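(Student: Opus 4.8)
The plan is to reduce everything to the level of chain complexes and then invoke multiplicativity of Reidemeister torsion under the base-change $\C[F]S^{-1}\to \C(t)$. First I would recall that, by construction, $\tau(N,\a)$ is the Reidemeister torsion of the $\C[F]$--chain complex $C_*^{\a}(N;\C[F]^k)$ obtained from a finite CW structure on $N$, at least once one knows this complex is acyclic over the quotient field $\C(F)$; the hypotheses of Proposition~\ref{prop:deltatau} (items (1) and (2)) guarantee that $\Delta_{N,0}^{\a}\ne 0$ and, when $\Delta_{N,1}^{\a}\ne 0$, also $\Delta_{N,2}^{\a}\ne 0$, so the relevant modules are torsion and the Euler-characteristic computation $\tau=\prod(\Delta_i)^{(-1)^{i+1}}$ identifies $\tau(N,\a)$ with that torsion up to $\doteq$. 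The degenerate case $\Delta_{N,1}^{\a}=0$ must be handled separately: there one checks that $\phi(\Delta_{N,1}^{\a})=0$ as well for all $\phi$ in the relevant localization (or that the statement reads $0\doteq 0$), so both sides of the claimed identity vanish.

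Next I would show $\tau(N,\a)\in \C[F]S^{-1}$. The point is that membership in $S$ is exactly the condition that $\phi$ does not kill a Laurent polynomial, and one must verify that the denominator of $\tau(N,\a)$—which divides $\Delta_{N,0}^{\a}\cdot\Delta_{N,2}^{\a}$—lies in $S$, i.e. is not sent to $0$ by $\phi$. When $\rank(F)>1$ this is immediate from parts (3) and (4) of Proposition~\ref{prop:deltatau}, since then $\Delta_{N,0}^{\a}\doteq \Delta_{N,2}^{\a}\doteq 1$ and $\tau(N,\a)\in\C[F]$ already. When $\rank(F)=1$ we have $F\cong\Z$, $\phi$ is (a multiple of) the identity, and $S$ consists of the nonzero Laurent polynomials, so again the denominator lies in $S$. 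This is the step where I expect to have to be slightly careful about the case division on $\rank(F)$, but it is not conceptually hard.

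The heart of the argument is the base-change formula for torsion. I would tensor the acyclic complex $C_*^{\a}(N;\C[F]S^{-1})$ (over the integral domain $R=\C[F]S^{-1}$) with the field $\C(t)$ along the ring homomorphism $\phi\colon R\to\C(t)$. One has a general fact: if $C_*$ is a bounded complex of free $R$-modules that becomes acyclic after inverting, and $R\to R'$ is a ring homomorphism such that $C_*\otimes_R R'$ is still acyclic over the relevant field, then $\tau(C_*\otimes_R R')\doteq \phi(\tau(C_*))$—this is just functoriality of the torsion under ring maps applied to chosen bases. Now $C_*^{\a}(N;\C[F]S^{-1})\otimes_{R}\C(t)\cong C_*^{\a\otimes\phi}(N;\C(t)^k)$ by associativity of tensor product over $\Z[\pi_1(N)]$, and the torsion of the latter is precisely $\tau(N,\phi,\a)$ by definition (again using Theorem~\ref{thm:fk06}'s framework, or directly the Euler-characteristic formula, to match torsion with the alternating product of twisted Alexander polynomials over $\C(t)$). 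Combining these identifications yields $\tau(N,\phi,\a)\doteq\phi(\tau(N,\a))$.

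The main obstacle, I expect, is not any single deep step but rather bookkeeping: making sure that the two occurrences of "$\doteq$" are over the correct rings (units in $\C[F]$ on one side, units in $\ct$ on the other, and the homomorphism $\phi$ carries one ambiguity into the other), and making sure the acyclicity hypotheses needed for torsion to be defined hold after base change—i.e. that $\Delta_{N,i}^{\a\otimes\phi}\ne 0$ follows from $\phi(\Delta_{N,i}^{\a})\ne 0$, which is where $S$ and the hypothesis $\tau(N,\a)\in\C[F]S^{-1}$ do their work. An alternative, more hands-on route avoids torsion language entirely: choose explicit presentation matrices for $H_0,H_1,H_2$ over $\C[F]$, observe that applying $\phi$ to these matrices (which is legitimate precisely because of the $S$-localization) gives presentation matrices for the corresponding modules over $\C(t)$, and then compute orders directly; this is essentially the proof of \cite[Theorem~6.6]{FK08} and only needs the mild generalization of allowing $\rank(F)>1$, which is harmless since all the relevant statements are about the single class $\phi$.
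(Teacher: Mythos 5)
The paper provides no proof of this proposition: it is stated as ``a slight generalization of [FK08, Theorem~6.6]'' with ideas attributed to Turaev, and the reader is simply pointed to those references. So there is no paper-internal argument to compare against; your proposal has to be judged on its own. On the whole your route (torsion functoriality under the ring map $\phi\colon \C[F]S^{-1}\to\C(t)$, with the presentation-matrix argument as the hands-on alternative) is precisely the Turaev/FK08 framework that the citation invokes, so the overall plan is sound.

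That said, your handling of the boundary cases conflates two genuinely different objects in a way that leaves a real gap. You repeatedly treat $\phi(\Delta_{N,i}^{\a})$ (the image under $\phi$ of an order polynomial computed over $\C[F]$) and $\Delta_{N,\phi,i}^{\a}$ (the order of $H_i^{\a\otimes\phi}(N;\ct^k)$, computed directly over $\ct$) as though one controlled the other tautologically. They do not: $H_i^{\a\otimes\phi}(N;\ct^k)$ is not $H_i^{\a}(N;\C[F]^k)\otimes_{\C[F]}\ct$, because of Tor correction terms in the universal coefficient spectral sequence, and reconciling them is exactly the content of the theorem you are trying to prove. Concretely, in the degenerate case $\Delta_{N,1}^{\a}=0$ you write that ``one checks $\phi(\Delta_{N,1}^{\a})=0$\dots so both sides vanish''; but $\phi(0)=0$ is vacuous, and what must actually be shown is $\Delta_{N,1}^{\a}=0\Rightarrow\Delta_{N,\phi,1}^{\a}=0$. (This is true, but needs an argument: specializing a complex of free $\C[F]$-modules along $\phi$ can only decrease the ranks of the boundary maps, hence can only increase the dimensions of the homology.) Similarly, your claimed step ``$\Delta_{N,i}^{\a\otimes\phi}\ne 0$ follows from $\phi(\Delta_{N,i}^{\a})\ne 0$'' is exactly the nontrivial point and is not supplied. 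Finally, you never address the possibility $\Delta_{N,1}^{\a}\ne 0$ but $\Delta_{N,\phi,1}^{\a}=0$, where $\tau(N,\phi,\a):=0$ by fiat and the base-change formula does not apply because the specialized complex is not acyclic; there one must separately check that $\phi(\tau(N,\a))=0$. The cleanest way to close all of these gaps simultaneously is your ``hands-on'' alternative: work with Fox-calculus presentation matrices as in [FK08, Theorem~6.6], so that all the quantities are computed from a single matrix over $\C[F]$ and the specialization is literal entry-wise application of $\phi$. I would make that the primary argument rather than a footnote.
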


\subsection{Induced representations}

Let $\pi$ be a group and let $\pi'\subset \pi$ be a normal subgroup of index $k$.  Let $\rho\colon \pi' \to U(1)$ be a character.
We  consider the action of $\pi$ given by left multiplication on the tensor product
\[ \C[\pi]\otimes_{\C[\pi']} \C,\]
where $\pi'$ acts on $\C$ via the character $\rho$.
Note that $ \C[\pi]\otimes_{\C[\pi']} \C$ is a complex $k$--dimensional vector space and we thus obtain a representation
$\a\colon \pi\to \gl(k,\C)$. This representation is called an \emph{extended character}.
If $\rho$ factors through a finite group, then we refer to $\a$ as an \emph{extended finite character}.
For future reference we record the following elementary fact:

\begin{lemma}\label{lem:induced}
\bn
\item An extended character  is a unitary representation.
\item An extended finite character factors through a finite group.
\en
\end{lemma}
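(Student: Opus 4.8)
The plan is to verify both parts directly from the construction of the extended character. Recall that $\a\colon \pi\to \gl(k,\C)$ arises as the left regular-type action of $\pi$ on $V:=\C[\pi]\otimes_{\C[\pi']}\C$, where $\pi'\triangleleft\pi$ has index $k$ and $\pi'$ acts on $\C$ through the character $\rho\colon\pi'\to U(1)$. Concretely, choosing coset representatives $g_1,\dots,g_k$ for $\pi/\pi'$, the vectors $e_j:=g_j\otimes 1$ form a $\C$-basis of $V$, and for $g\in\pi$ one has $g\cdot e_j = g_m\otimes \rho(g_m^{-1}gg_j)$ where $g_m$ is the representative of the coset $gg_j\pi'$. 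This is just the standard description of the induced representation $\operatorname{Ind}_{\pi'}^{\pi}\rho$.

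For part (1), I would equip $V$ with the Hermitian inner product making $e_1,\dots,e_k$ orthonormal, and check that each $\a(g)$ is unitary with respect to it. Given $g\in\pi$, the map $j\mapsto m(j)$ sending the coset of $g_j$ to the coset of $gg_j$ is a permutation of $\{1,\dots,k\}$, so $\a(g)$ sends the orthonormal basis $\{e_j\}$ to $\{g_{m(j)}\otimes \rho(g_{m(j)}^{-1}gg_j)\}$, which is again an orthonormal set because each scalar $\rho(g_{m(j)}^{-1}gg_j)$ lies in $U(1)$ (note $g_{m(j)}^{-1}gg_j\in\pi'$ by choice of $m(j)$, so $\rho$ is defined on it). An operator carrying one orthonormal basis to another orthonormal basis is unitary, so $\a(g)\in U(k)$; hence $\a$ is a unitary representation.

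For part (2), I would argue as follows. By hypothesis $\rho$ factors through a finite group, i.e.\ $\rho$ is trivial on a finite-index normal subgroup $K\triangleleft\pi'$; since $\pi'$ has finite index in $\pi$, after replacing $K$ by the intersection of its finitely many $\pi$-conjugates we may assume $K\triangleleft\pi$ and $[\pi':K]<\infty$, hence $[\pi:K]<\infty$. It then suffices to check that $\a$ is trivial on $K$ — but that is false in general, so instead I should pass to the normal core $L:=\bigcap_{g\in\pi} g\pi' g^{-1}$, which has finite index in $\pi$, and observe that the restriction of $\a$ to any finite-index normal subgroup is a direct sum over cosets of restrictions of $\rho$ to conjugates $g_j^{-1}(\,\cdot\,)g_j$. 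Concretely, set $K_0:=\bigcap_{j=1}^k g_j K g_j^{-1}$, a finite-index subgroup of $\pi'$, normal in $\pi$ after taking its core; for $h$ in this subgroup, $g_j^{-1}hg_j\in K$ for all $j$, so $\rho(g_j^{-1}hg_j)=1$, and the formula above gives $\a(h)e_j = e_j$ for all $j$, i.e.\ $\a(h)=\id$. Thus $\a$ is trivial on a finite-index subgroup of $\pi$ and, after replacing it by its core, on a finite-index normal subgroup; therefore $\a$ factors through the finite quotient $\pi/(\text{that subgroup})$.

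The only mildly delicate point — the ``main obstacle'' such as it is — is part (2): one must resist the temptation to conclude directly from ``$\rho$ factors through a finite group'' that ``$\a$ factors through a finite group'' without noticing that $\a$ involves the \emph{conjugates} $\rho(g_j^{-1}\cdot\, g_j)$, not just $\rho$ itself. Once one intersects the relevant finite-index kernels over the finitely many coset representatives (and takes a normal core), the argument is completely routine. Part (1) is entirely formal.
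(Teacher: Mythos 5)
The paper records this lemma as an elementary fact and supplies no proof, so there is no paper argument to compare against; your proof is correct and uses the standard arguments one would expect. Part (1) is exactly the usual observation that the induced representation of a unitary (here one-dimensional) representation is unitary: with respect to the inner product making the basis $e_j = g_j\otimes 1$ orthonormal, each $\a(g)$ permutes the lines $\C e_j$ and acts on each by a scalar in $U(1)$, hence is unitary.

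Part (2) reaches the right conclusion, but the interjection in the middle is mistaken. You write that after replacing $K \subset \ker\rho$ by the intersection of its $\pi$-conjugates, so that $K \triangleleft \pi$, $K \subset \pi'$ and still $K \subset \ker\rho$, ``it then suffices to check that $\a$ is trivial on $K$ --- but that is false in general.'' In fact it is true: for $h \in K$ and any $j$ one has $hg_j\pi' = g_j\pi'$ since $K \subset \pi'$, hence $\a(h)e_j = g_j \otimes \rho(g_j^{-1}hg_j)$; and $g_j^{-1}hg_j \in K \subset \ker\rho$ precisely because $K$ is now normal in $\pi$, so $\a(h)e_j = e_j$. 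This is exactly the computation you then carry out with $K_0 = \bigcap_j g_jKg_j^{-1}$, which coincides with the normal core you had already formed. So the subsequent detour through $L = \bigcap_{g\in\pi} g\pi'g^{-1}$ and $K_0$ is redundant rather than a repair. (It is also worth noting that one does not even need the subgroup on which $\a$ is trivial to be normal: once $\a$ kills some finite-index subgroup, $\ker\a$ is a finite-index normal subgroup and $\a$ factors through the finite quotient $\pi/\ker\a$.) None of this affects correctness --- it is a harmless over-complication, not a gap.
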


We conclude this section with the following well--known lemma:

\begin{lemma}\label{lem:shapironorm}
Let $N$ be a 3-manifold  and let $p\colon M\to N$ be a finite regular covering map
of index $k$. Let $\rho\colon \pi_1(M) \to U(1)$ be a character
and denote by $\a\colon \pi_1(N)\to U(k)$ the extended character.
Let $\psi\colon \pi_1(N)\to F$ be a rationally surjective  homomorphism to a free abelian group $F$. Then
\[ \tau(M,\psi\circ p_*,\rho)=\tau(N,\psi,\a).\]
\end{lemma}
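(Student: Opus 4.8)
The plan is to deduce this from the classical Shapiro isomorphism at the level of twisted chain complexes, together with the compatibility of this isomorphism with the $\C[F]$--module structures, and then invoke the definition of $\tau$ as the alternating product of the orders of the twisted homology modules. First I would set up notation: write $\pi = \pi_1(N)$, $\pi' = \pi_1(M) \subset \pi$ the index-$k$ normal subgroup, and $\a = \C[\pi] \otimes_{\C[\pi']} \C$ the extended character, where $\pi'$ acts on $\C$ via $\rho$. Since $p$ is regular, the covering $\ti N \to M$ induced on universal covers identifies $C_*(\ti M;\Z)$ with $C_*(\ti N;\Z)$ as abelian groups, but the former is a right $\Z[\pi']$--module and the latter a right $\Z[\pi]$--module; more precisely $C_*(\ti M;\Z) = C_*(\ti N;\Z)$ as right $\Z[\pi']$--modules after restriction of scalars.

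The key algebraic step is the Shapiro isomorphism of coefficient systems. Let $\tilde\rho := \rho \otimes (\psi\circ p_*)$ be the relevant twisted representation on the $M$ side, landing in $\gl(1,\C[F]) = \C[F]^\times$, and let $\tilde\a := \a \otimes \psi$ on the $N$ side, landing in $\gl(k,\C[F])$. Because $\psi$ is a homomorphism defined on $\pi$, the module $\C[F]^k$ carrying $\tilde\a$ is canonically $\big(\C[\pi]\otimes_{\C[\pi']}\C\big)\otimes_\C \C[F]$ with the diagonal $\pi$-action, which one rewrites as $\C[\pi]\otimes_{\C[\pi']} \C[F]$ where $\pi'$ acts on $\C[F]$ through $\rho\otimes(\psi|_{\pi'})$ — this is exactly the coefficient module for $\tilde\rho$ induced up from $\pi'$ to $\pi$. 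Standard change-of-rings then gives, for every $i$, a natural isomorphism of $\C[F]$--modules
\[
H_i^{(\psi\circ p_*)\otimes\rho}(M;\C[F]) \;\cong\; H_i^{\psi\otimes\a}(N;\C[F]^k),
\]
because $C_*(\ti N;\Z)\otimes_{\Z[\pi']}\C[F] \cong C_*(\ti N;\Z)\otimes_{\Z[\pi]}\big(\C[\pi]\otimes_{\C[\pi']}\C[F]\big)$ by associativity of the tensor product, and the left-hand side is precisely $C_*(\ti M;\Z)\otimes_{\Z[\pi']}\C[F]$.

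From the isomorphism of the twisted homology $\C[F]$--modules in every degree it follows that $\Delta^{\rho}_{M,\psi\circ p_*,i} \doteq \Delta^{\a}_{N,\psi,i}$ for $i=0,1,2$, since the order of a finitely presented module over the Noetherian ring $\C[F]$ is an invariant of its isomorphism class (well-defined up to a unit, which is the ambiguity already absorbed in $\doteq$). In particular $\Delta^{\rho}_{M,\psi\circ p_*,1} \ne 0$ if and only if $\Delta^{\a}_{N,\psi,1}\ne 0$, so either both torsions are zero and we are done, or both are nonzero and, taking the alternating product $\prod_{i=0}^2 (\Delta_{\cdot,i})^{(-1)^{i+1}}$, the equality $\tau(M,\psi\circ p_*,\rho)=\tau(N,\psi,\a)$ holds in $\C(F)$, again up to the usual unit ambiguity which is built into the definition of $\tau$. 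I expect the main obstacle to be purely bookkeeping: namely verifying carefully that the Shapiro isomorphism is $\C[F]$--linear and not merely $\C$--linear — this hinges on the fact that $\psi$ is defined on all of $\pi$ (so that the $F$-variable "passes through" the induction) and that one uses $\psi\circ p_*$, the restriction of the very same $\psi$, on the cover. Once the identification of coefficient modules is pinned down, the rest is the formal behaviour of orders under isomorphism. (One should also note, as recorded in Lemma \ref{lem:induced}, that $\a$ is genuinely unitary, so the statement is not vacuous.)
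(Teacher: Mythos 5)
Your proposal is correct and follows the same route as the paper: the paper's proof simply cites Shapiro's lemma to conclude $\Delta_{M,\psi\circ p_*,i}^{\rho}\doteq\Delta_{N,\psi,i}^{\a}$ for each $i$ and then takes the alternating product, which is exactly what you do. You have merely unpacked the Shapiro isomorphism (the change-of-rings identification $C_*(\ti N;\Z)\otimes_{\Z[\pi']}\C[F]\cong C_*(\ti N;\Z)\otimes_{\Z[\pi]}(\C[\pi]\otimes_{\C[\pi']}\C[F])$ together with the projection formula, with the $\C[F]$-linearity correctly flagged as the point that needs checking) and handled the vanishing case explicitly, both of which the paper leaves implicit.
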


\begin{proof}
It is an immediate consequence of Shapiro's lemma (cf. \cite[Proposition~6.2]{Br94} and also \cite{FV08})
that for any $i$ the following equality holds:
\[ \Delta_{M,\psi\circ p_*,i}^{\rho}=\Delta_{N,\psi,i}^{\a}\in \C[F].\]
The lemma now follows from the definitions.
\end{proof}

\section{The twisted Alexander norm} \label{sec:twisted alexandernorm}

\subsection{Definition of the twisted Alexander norm}
We now recall the definition of twisted Alexander norms introduced in \cite{FK08}.
 Let $N$ be a 3-manifold with $b_1(N)>1$. We write $F:=H_1(N;\Z)/\mbox{torsion}$.
 Let $\a\colon \pi_1(N)\to \gl(k,\C)$
 be a representation. We will now define a seminorm $y^\a_N$ on  $H^1(N;\R)=\hom(F,\R)$ using $\tau(N,\a)$.

 If $\tau(N,\a)=0$ then we set
$y^\a_{N}(\phi)=0$ for any $\phi\in H^1(N;\R)$.
Now suppose that  $\tau(N,\a)\ne 0$.
Since $b_1(N)=\operatorname{rank}(F)>1$ it follows from Proposition \ref{prop:taudelta}
that $\tau(N,\a)\in \C[F]$. We can therefore  write
$\tau(N,\a)=\sum_{f\in F}a_ff$ and we define
\[ \ba{rcl} y_{N}^\a\colon \hom(F,\R)&\to & \R_{\geq 0} \\
\phi&\mapsto & \max\{ \phi(f_1)-\phi(f_2)\,|\,f_1,f_2\in F\mbox{ with } a_{f_1}\ne 0 \mbox{ and } a_{f_2}\ne 0\}.\ea \]
It is clear that $y_{N}^\a$ defines a norm on $H^1(N;\R)=\hom(F,\R)$ and we refer to $y^\a_{N}$ as the
\emph{twisted Alexander norm of $(N,\a)$}.
If $\a$ is the trivial one--dimensional representation, then we drop $\a$ from the notation.

\begin{remark}
\bn
\item With the above conventions the seminorm $y_N$ is just the ordinary Alexander norm introduced by McMullen
\cite{Mc02}.
\item
 Twisted Alexander norms corresponding to abelian one--dimensional representations were first considered by  Turaev
\cite{Tu02a}, twisted Alexander norms for arbitrary representations were introduced in \cite{FK08}.
\en
\end{remark}

\subsection{Lower bounds on the Thurston norm and fibered classes}

In this section we recall results relating the Thurston norm of a 3-manifold to  the twisted Alexander norms. We start out with the following lemma.

\begin{lemma}\label{lem:taunorm}
Let $N$ be a 3-manifold with $b_1(N)>1$ and  let $\a\colon \pi_1(N)\to \gl(k,\C)$ be a representation. Given any  $\phi\in H^1(N;\Z)$ we have
\[ \deg(\tau(N,\phi,\a)) \leq y_{N}^\a(\phi),\]
furthermore  equality holds for all $\phi$ outside a finite collection of hyperplanes in $H^1(N;\Q)$.
\end{lemma}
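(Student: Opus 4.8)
The plan is to reduce the statement to the change-of-variables formula of Proposition~\ref{prop:changevartau} together with the definition of $y_N^\a$, and to read off the exceptional hyperplanes from the Newton polytope of $\tau(N,\a)$. If $\tau(N,\a)=0$, then $y_N^\a\equiv 0$ by definition, while Proposition~\ref{prop:changevartau} gives $\tau(N,\phi,\a)\doteq\phi(\tau(N,\a))=0$, so both sides vanish identically and there is nothing to prove; assume from now on that $\tau(N,\a)\neq 0$. Since $b_1(N)=\operatorname{rank}(F)>1$, Proposition~\ref{prop:taudelta} ensures $\tau(N,\a)\in\C[F]$, so we may write $\tau(N,\a)=\sum_{f\in F}a_f\,f$ as a finite sum. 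By Proposition~\ref{prop:changevartau},
\[
  \tau(N,\phi,\a)\ \doteq\ \phi\big(\tau(N,\a)\big)\ =\ \sum_{f\in F}a_f\,t^{\phi(f)}\ =\ \sum_{n\in\Z}\Big(\textstyle\sum_{\phi(f)=n}a_f\Big)t^{\,n}\ \in\ \ct,
\]
and since the degree is invariant under $\doteq$ it suffices to compute the degree of this Laurent polynomial.

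First I would prove the inequality. Set $M=\max\{\phi(f)\mid a_f\neq 0\}$ and $m=\min\{\phi(f)\mid a_f\neq 0\}$. Every nonzero monomial $c_n t^{\,n}$ appearing in $\phi(\tau(N,\a))$ satisfies $m\le n\le M$, so $\deg(\tau(N,\phi,\a))\le M-m$; and $M-m=y_N^\a(\phi)$ is precisely the defining expression for the twisted Alexander norm. (If $\phi(\tau(N,\a))=0$ this still holds, the left side being $0$ and $y_N^\a(\phi)\ge 0$.) This gives $\deg(\tau(N,\phi,\a))\le y_N^\a(\phi)$ for all $\phi$.

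Next I would identify the exceptional set. Let $P\subset F\otimes_\Z\R$ be the Newton polytope of $\tau(N,\a)$, i.e. the convex hull of $\operatorname{supp}(\tau(N,\a))=\{f\mid a_f\neq 0\}$; it has finitely many vertices, each of which lies in $\operatorname{supp}(\tau(N,\a))$. Let $\mathcal{H}$ be the union, over all (finitely many) pairs of distinct vertices $v,w$ of $P$, of the rational hyperplanes $\{\phi\in H^1(N;\Q)\mid \phi(v)=\phi(w)\}$. If $\phi$ is an integral class not lying on any hyperplane of $\mathcal{H}$, then $\phi$ takes pairwise distinct values on the vertices of $P$, hence attains its maximum over $P$ at a single vertex $v$ and its minimum at a single vertex $w$; consequently the value $M=\phi(v)$ is attained on $\operatorname{supp}(\tau(N,\a))$ only at $f=v$, so the coefficient of $t^M$ in $\phi(\tau(N,\a))$ is $a_v\neq 0$, and likewise the coefficient of $t^m$ is $a_w\neq 0$. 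Therefore $\deg(\tau(N,\phi,\a))=M-m=y_N^\a(\phi)$ for every integral $\phi$ outside the finite family $\mathcal{H}$.

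The argument is formal once Propositions~\ref{prop:changevartau} and~\ref{prop:taudelta} are available; the only subtlety is the cancellation that may occur among the $a_f$ when one specializes the multivariable polynomial $\tau(N,\a)$ along $\phi\colon F\to\Z$, and the correct way to book-keep exactly which $\phi$ are affected is via the Newton polytope of $\tau(N,\a)$ --- which also makes transparent that $y_N^\a(\phi)$ is nothing but the width of this polytope in the direction $\phi$, a width realized at genuine vertices for all but finitely many hyperplanes' worth of $\phi$.
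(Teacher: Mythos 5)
Your proposal is correct and follows essentially the same route as the paper's proof: apply Proposition~\ref{prop:changevartau} to reduce to the degree of the one-variable specialization $\sum_f a_f t^{\phi(f)}$, read off the inequality immediately, and locate the exceptional $\phi$ on finitely many rational hyperplanes coming from the support of $\tau(N,\a)$. The one small difference is that you restrict to hyperplanes determined by pairs of \emph{vertices} of the Newton polytope, whereas the paper more crudely uses all pairs of support elements; your version yields a tighter exceptional set, but both give the required finiteness, so this is a refinement rather than a different argument.
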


The proof is essentially given in \cite{Mc02} and we provide it for the reader's convenience
as it  is helpful in understanding the proof of Theorem \ref{mainthm}.

\begin{proof}
We write $F:=H_1(N;\Z)/\mbox{torsion}$.
By Proposition \ref{prop:taudelta} we can   write
$\tau(N,\a)=\sum_{f\in A}a_ff$ with $A\subset F$ and $a_f\ne 0$ for all $f\in A$.
 It follows from Proposition   \ref{prop:changevartau} that
\[ \tau(N,\phi,\a)=\phi(\tau(N,\a))=\sum_{f\in F}a_ft^{\phi(f)}.\]
It follows that
\[ \deg(\tau(N,\phi,\a))=\deg\left(\sum_{f\in F}a_ft^{\phi(f)}\right).\]
It is clear that
\[ \deg(\sum_{f\in F}a_ft^{\phi(f)})\leq  \max\{ \phi(f_1)-\phi(f_2)\,|\,f_1,f_2\in A\},\]
and that equality holds
unless there exist $f_1,f_2\in A$ with $\phi(f_1)=\phi(f_2)$.
The lemma now follows immediately.
\end{proof}

The following theorem is now a consequence of the above lemma, Theorem \ref{thm:fk06}
and the fact that  norms are continuous. The theorem was first proved
in \cite[Theorem~3.1]{FK08}.

\begin{theorem}\label{thm:fk08}
Let $N$ be a 3-manifold with $b_1(N)>1$ and  let $\a\colon \pi_1(N)\to \gl(k,\C)$ be a representation.
 Then for any $\phi\in H^1(N;\R)$ we have
\[ \frac1k y_{N}^\a(\phi) \le x_N(\phi)   \]
and equality holds if $\phi$  is a fibered class.
\end{theorem}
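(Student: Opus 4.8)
The plan is to deduce Theorem \ref{thm:fk08} from the three ingredients already in hand: Lemma \ref{lem:taunorm} (which compares $\deg(\tau(N,\phi,\a))$ with the twisted Alexander norm $y^\a_N(\phi)$), Theorem \ref{thm:fk06} (which compares $\deg(\tau(N,\phi,\a))$ with the Thurston norm $x_N(\phi)$), and the elementary fact that all three functions $x_N$, $y^\a_N$ and $\frac1k y^\a_N$ are continuous seminorms on the finite-dimensional vector space $H^1(N;\R)$, so that an inequality between two of them holds on all of $H^1(N;\R)$ as soon as it holds on a dense subset.

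First I would establish the inequality $\frac1k y^\a_N(\phi)\le x_N(\phi)$ for integral classes $\phi\in H^1(N;\Z)$. Combining the first assertion of Lemma \ref{lem:taunorm} with inequality (\ref{equ:xy}) of Theorem \ref{thm:fk06} gives, for every nonzero $\phi\in H^1(N;\Z)$,
\[ \tfrac1k y^\a_N(\phi)\ge \tfrac1k\deg(\tau(N,\phi,\a)) \quad\text{is the wrong direction;}\]
instead one uses that Lemma \ref{lem:taunorm} says $\deg(\tau(N,\phi,\a))\le y^\a_N(\phi)$, while I actually need a lower bound on $\deg(\tau)$ coming from equality in the generic case. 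So the correct route is: by Lemma \ref{lem:taunorm}, for $\phi$ outside a finite union of rational hyperplanes we have $\deg(\tau(N,\phi,\a))= y^\a_N(\phi)$, and by Theorem \ref{thm:fk06} we have $\frac1k\deg(\tau(N,\phi,\a))\le x_N(\phi)$; hence $\frac1k y^\a_N(\phi)\le x_N(\phi)$ for all integral $\phi$ off those hyperplanes. Since the complement of finitely many hyperplanes is dense in $H^1(N;\R)$, and both $\frac1k y^\a_N$ and $x_N$ are continuous, the inequality $\frac1k y^\a_N(\phi)\le x_N(\phi)$ extends to all of $H^1(N;\R)$. (One must be mildly careful that the dense set of integral classes off the hyperplanes, when scaled, is dense among all real classes; this follows because the hyperplanes are defined over $\Q$.)

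For the equality statement when $\phi$ is fibered, I would argue as follows. Fibered classes form an open cone, namely the cone on the fibered faces $F_1,\dots,F_r$ of the Thurston norm ball described in Section \ref{section:thurstonnorm}. If $\phi\in H^1(N;\R)$ is fibered, pick a sequence — or better, use that an open cone over a rational polytope face contains rational, hence (after clearing denominators) integral, classes that are fibered and that avoid the finitely many bad hyperplanes of Lemma \ref{lem:taunorm}; such classes are dense in the fibered cone. For each such integral fibered $\phi$, the ``furthermore'' clause of Theorem \ref{thm:fk06} gives $\frac1k\deg(\tau(N,\phi,\a))=x_N(\phi)$, and Lemma \ref{lem:taunorm} gives $\deg(\tau(N,\phi,\a))=y^\a_N(\phi)$ on the complement of the bad hyperplanes, whence $\frac1k y^\a_N(\phi)=x_N(\phi)$ there. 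Both sides being continuous and the good fibered integral classes being dense in the open fibered cone, the equality $\frac1k y^\a_N(\phi)=x_N(\phi)$ holds throughout that cone, i.e. for every fibered $\phi$.

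The main obstacle is purely the bookkeeping of density: one needs that the set of integral classes lying in the complement of the finitely many rational hyperplanes from Lemma \ref{lem:taunorm} is dense in $H^1(N;\R)$, and — for the second part — that the analogous set intersected with the open fibered cone is dense in that cone. Both are standard (a finite union of proper rational subspaces has dense rational complement, and scaling integral points gives a dense set of rays), but they are the only place where care is required; everything else is an immediate concatenation of Lemma \ref{lem:taunorm}, Theorem \ref{thm:fk06}, and continuity of seminorms.
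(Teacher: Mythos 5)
Your proof is correct and follows exactly the route the paper indicates (the text explicitly says Theorem \ref{thm:fk08} is ``a consequence of the above lemma, Theorem \ref{thm:fk06} and the fact that norms are continuous''): combine the generic equality case of Lemma \ref{lem:taunorm} with the inequality of Theorem \ref{thm:fk06} on integral classes off finitely many rational hyperplanes, then use continuity of the seminorms to extend to all of $H^1(N;\R)$, and repeat with the ``furthermore'' clause of Theorem \ref{thm:fk06} on the open fibered cone for the equality statement. The density bookkeeping you flag is indeed the only point requiring care, and you dispose of it correctly.
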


\noindent The first part of  Theorem \ref{thm:fk08} generalizes McMullen's theorem  \cite{Mc02}. Turaev \cite{Tu02a} proved this theorem in the
special case of abelian representations.

\section{Proof of Theorem \ref{mainthm}}

\subsection{Agol's virtual fibering theorem} \label{section:agol}

We say that $\phi \in H^1(N;\R)$ is \emph{quasi--fibered}  if there exists a fibered face $F$ of the Thurston norm ball such that $\phi$ lies in a cone on the closure of $F$.  Put differently,  $\phi$ is quasi--fibered if and only if any neighborhood of $\phi$ in $H^1(N;\R)$ contains a fibered class.
Note that in particular fibered classes are quasi--fibered.

We recall from \cite{Ag08} the following definition:
A group $\pi$ is  \emph{residually finite rationally solvable (RFRS)} if there
exists a filtration  of groups $\pi=\pi_0\supset \pi_1 \supset \pi_2\dots $
such that the following hold:
\newcounter{itemcountern}
\begin{list}
{{(\arabic{itemcountern})}}
{\usecounter{itemcountern}\leftmargin=2em}
\item $\bigcap_i \pi_i=\{1\}$;
\item   $\pi_i$ is a normal, finite-index subgroup of  $\pi$ for any $i$;
\item for any $i$ the map $\pi_i\to \pi_i/\pi_{i+1}$ factors through $\pi_i\to H_1(\pi_i;\Z)/\mbox{torsion}$.
\end{list}
We refer to \cite{Ag08} for details and more information on RFRS groups.
 A group is \textit{virtually} RFRS if it admits a finite index subgroup that is RFRS.

We can now formulate Agol's virtual fibering theorem (see \cite{Ag08}) which is one of the key ingredients in our proof of Theorem \ref{mainthm}.

\begin{theorem} \textbf{\emph{(Agol)}}\label{thm:agol}
Let $N$ be an irreducible 3-manifold such that $\pi_1(N)$ is virtually RFRS. Then given any  $\phi \in H^1(N;\R)$ there exists a finite regular cover $p\colon M\to N$, such that $p^*\phi\in H^1(M;\R)$ is quasi--fibered.
\end{theorem}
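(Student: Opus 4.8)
The plan is to follow Agol's argument from \cite{Ag08}; its heart is an induction over a sutured manifold hierarchy, and the surrounding steps are routine.

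\textbf{Reductions.} First I would reduce to the case that $\pi_1(N)$ is RFRS, not merely virtually RFRS. If $N'\to N$ is a finite cover with $\pi_1(N')$ RFRS and $M\to N'$ is a finite cover such that the pullback of $\phi$ is quasi--fibered, then $M\to N$ is a finite cover; replacing $M$ by the cover associated to the normal core of $\pi_1(M)$ in $\pi_1(N)$ makes it \emph{regular}, and quasi--fiberedness survives a further finite cover because a quasi--fibered class is a limit of fibered classes, pullback on $H^1(\,\cdot\,;\R)$ is linear and continuous, and pullback carries fibered classes to fibered classes (using Stallings' theorem \cite{St62}, exactly as in Section \ref{section:thurstonnorm}). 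Second, I would reduce to $\phi$ rational and in fact aim to make $p^*\phi$ genuinely \emph{fibered}: choose a rational class $\psi$ lying, together with $\phi$, in the relative interior of a closed face of $B(N)$ (allowing the degenerate case of a linear subspace on which $x_N$ vanishes); if some finite cover $p\colon M\to N$ makes $p^*\psi$ fibered, then since $x_M\circ p^*=k\cdot x_N$ is linear on the cone over that face by (\ref{equ:tnfinitecover}), the image of that cone under $p^*$ lies in the cone over a single closed face $\overline{G}$ of $B(M)$; as $p^*\psi$ lies in the open cone over $G$, the face $G$ is fibered, so $p^*\phi$ lies in the cone over $\overline{G}$ and is quasi--fibered. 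After rescaling $\psi$, it thus suffices to prove: \emph{if $\pi_1(N)$ is RFRS and $\phi\in H^1(N;\Z)$ is primitive, then $p^*\phi$ is fibered for some finite regular cover $p\colon M\to N$.}

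\textbf{Main argument.} Fix a surface $S\subset N$ dual to $\phi$ with $\chi_-(S)=x_N(\phi)$, and let $Y$ be the sutured manifold obtained by cutting $N$ open along $S$; it is taut, and if $Y$ is a product $S\times[0,1]$ then $N$ is the mapping torus of a self--homeomorphism of $S$ and $\phi$ is fibered. By Gabai's theory \cite{Ga83}, $Y$ admits a well--groomed sutured manifold hierarchy, and I would induct on a suitable complexity of such a hierarchy. If $Y$ is already a product we are done; otherwise let $R$ be the first decomposing surface of the hierarchy. This is where the RFRS filtration $\pi_1(N)=\pi_0\supset\pi_1\supset\pi_2\supset\cdots$ enters: condition (3), that $\pi_i\to\pi_i/\pi_{i+1}$ factors through $\pi_i\to H_1(\pi_i;\Z)/\mbox{torsion}$, lets one select an index $i$ whose associated finite cover $p_1\colon N_1\to N$ has the feature that, roughly, the decomposition of $Y$ along $R$ is realized geometrically upstairs --- the class $p_1^*\phi$ is dual to an embedded surface $S_1\subset N_1$ assembled from the preimage of $S$ and a lift of $R$, and cutting $N_1$ along $S_1$ yields a taut sutured manifold of strictly smaller complexity. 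Applying the inductive hypothesis to $(N_1,p_1^*\phi)$ produces a finite cover of $N_1$ on which the pullback of $\phi$ is fibered; composing all these covers and passing once more to a normal core gives the required finite regular cover of $N$.

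\textbf{Main obstacle.} The real content, and the step I expect to be hard, is this inductive step: arranging that the abstract decomposition prescribed by Gabai's hierarchy is carried out by an honest embedded surface dual to the pulled--back class in a cover that is \emph{dictated} by the RFRS tower rather than chosen freely, while controlling how the dual cohomology class and the connectivity of the cut--open pieces behave under cutting and covering. The two reductions, the base case, and the reassembly of the covers I expect to be routine, using the Thurston norm material of Section \ref{section:thurstonnorm} (notably (\ref{equ:tnfinitecover}) and Thurston's fibered faces) together with Stallings' fibering criterion; and, as the first reduction shows, ``quasi--fibered'' rather than ``fibered'' is needed in the statement only to accommodate irrational classes, since for primitive integral $\phi$ the argument yields an honest fibration.
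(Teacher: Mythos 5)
The paper does not give a proof of this theorem: it is stated verbatim as Agol's virtual fibering theorem with a citation to \cite{Ag08}, and the paper's contribution is to \emph{use} it, not to reprove it. Your proposal is therefore an attempt to reproduce Agol's external argument, and as a high-level outline --- cut along a norm-minimizing surface, run an induction on a sutured manifold hierarchy, use the RFRS tower to choose covers absorbing the decomposing surfaces, and honestly defer the hard inductive step --- it is faithful to Agol's strategy.

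However, your second reduction is wrong in a way that changes what you would be proving. You reduce to the claim: \emph{if $\pi_1(N)$ is RFRS and $\phi \in H^1(N;\Z)$ is primitive, then $p^*\phi$ is fibered for some finite regular cover}. This is false, and strictly stronger than Agol's theorem. By (\ref{equ:tnfinitecover}) the map $p^*$ is, up to the scalar $k$, a norm-preserving monomorphism of Thurston-normed spaces; hence if $p^*\phi$ lies in the open cone over a top-dimensional face of $B(M)$, then $x_M$ is linear on a neighborhood of $p^*\phi$, so $x_N$ is linear on a neighborhood of $\phi$, and $\phi$ must already lie in the open cone over a top-dimensional face of $B(N)$. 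Consequently, whenever $\phi$ lies on a lower-dimensional face of $B(N)$ (for instance on the common boundary of two top-dimensional cones), $p^*\phi$ is never fibered in \emph{any} finite cover, so the statement you want to reduce to cannot hold. Moreover, even for $\phi$ in the interior of a top-dimensional face, Agol's induction does not produce a fibration of the class $p^*\phi$ itself: at each stage the lifted decomposing surface contributes a nontrivial homology class, so the class that eventually becomes fibered is $p^*\phi$ \emph{plus} contributions from the hierarchy, and $p^*\phi$ is only guaranteed to lie in the closure of the resulting fibered cone. Your closing remark --- that ``quasi-fibered'' is needed only to accommodate irrational classes --- is therefore a misreading; the quasi-fibered conclusion is already essential for primitive integral classes, and the reduction as stated would commit you to proving a false intermediate statement.
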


The following is a well--known consequence of Agol's virtual fibering theorem:

\begin{corollary}\label{cor:agol}\label{cor:quasifi}
Let $N$ be an irreducible $3$--manifold with virtually RFRS fundamental group.
 Then there exists a finite regular cover $p \colon  {M} \to N$  such that for every nontrivial class $\phi \in H^1(N;\R)$, the class $p^* \phi \in H^1({M};\R)$  is quasi-fibered. \end{corollary}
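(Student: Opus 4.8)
The plan is to bootstrap from Agol's virtual fibering theorem (Theorem~\ref{thm:agol}), which already yields, for each \emph{single} class, a finite regular cover over which the pullback is quasi-fibered; the real work is to make \emph{one} cover serve all of $H^1(N;\R)$ at once. (The hypotheses on $N$ enter only through Theorem~\ref{thm:agol}.) If $b_1(N)=0$ the statement is vacuous, so assume $b_1(N)\ge 1$. Two soft ingredients come first. \emph{Monotonicity}: if $M'\to M$ is any finite cover and $\psi\in H^1(M;\R)$ is quasi-fibered, then its pullback to $M'$ is quasi-fibered: $\psi$ lies in the cone on the closure of some fibered face $F$ of $B(M)$, hence is a limit of fibered classes (those in the open cone on $F$); fibered classes pull back to fibered classes under finite covers (Section~\ref{section:thurstonnorm}); and the quasi-fibered classes of $M'$ form a closed set, being the finite union of the cones on the closed fibered faces of $B(M')$. \emph{Common covers}: finitely many finite regular covers of $N$ are simultaneously dominated by a single finite regular cover, obtained from the intersection of the corresponding finite-index normal subgroups of $\pi_1(N)$.

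Next I would exploit that $B(N)$ is a finite polytope (Section~\ref{section:thurstonnorm}): writing $B^*(N)\subset H_1(N;\R)$ for its polar dual, one has $x_N(\phi)=\max_j\langle v_j,\phi\rangle$ for the finitely many vertices $v_1,\dots,v_m$ of $B^*(N)$, so the normal cones $D_j:=\{\phi\in H^1(N;\R):\langle v_j,\phi\rangle=x_N(\phi)\}$ are full-dimensional rational polyhedral cones whose union is $H^1(N;\R)$. For each $j$ pick a nonzero rational class $\phi_j$ in the interior of $D_j$ (nonempty since $b_1(N)\ge 1$, and disjoint from the origin once $m\ge 2$). Applying Theorem~\ref{thm:agol} to each $\phi_j$ and using ``common covers'', let $p\colon M\to N$ be a finite regular cover of degree $k$ dominating all of the covers so produced. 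By monotonicity each $p^*\phi_j$ is quasi-fibered in $M$, hence lies in the cone on the closure of a fibered face $F_j$ of $B(M)$; since fibered faces are top-dimensional, $F_j$ is exposed by a vertex $w_j$ of the dual ball $B^*(M)$.

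The crux, and the step I expect to be the main obstacle, is to promote ``$p^*\phi_j$ is quasi-fibered'' to ``$p^*(D_j)$ consists entirely of quasi-fibered classes''. The difficulty is that $p^*\phi_j$ may lie on the \emph{boundary} of the fibered face $F_j$, so openness is unavailable and one must argue globally with linear functionals. Set $m_{F_j}:=\langle w_j,\cdot\rangle$, so that $x_M\ge m_{F_j}$ on $H^1(M;\R)$ with the cone over $\overline{F_j}$ being exactly the locus $\{x_M=m_{F_j}\}$. From $x_M\circ p^*=k\cdot x_N$ (equation~\eqref{equ:tnfinitecover}) and $x_N=\langle v_j,\cdot\rangle$ on $D_j$, the linear functional $k\langle v_j,\cdot\rangle-m_{F_j}\circ p^*$ on $H^1(N;\R)$ is $\ge 0$ throughout the full-dimensional cone $D_j$ and vanishes at its interior point $\phi_j$ (since $p^*\phi_j$ lies where $x_M=m_{F_j}$); hence it is identically zero. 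Therefore $x_M(p^*\phi)=m_{F_j}(p^*\phi)$ for every $\phi\in D_j$, so $p^*(D_j)$ is contained in $\{x_M=m_{F_j}\}$, i.e.\ in the cone over $\overline{F_j}$, and every class in $p^*(D_j)$ is quasi-fibered. Since the $D_j$ cover $H^1(N;\R)$, every nonzero $\phi$ lies in some $D_j$, which completes the proof. (Classes of vanishing norm need no separate treatment: if $x_N(\phi)=0$ then $\langle v_j,\phi\rangle=0=x_N(\phi)$ for every $j$, so $\phi\in D_j$; equivalently, the subspace $\{x_M=0\}$ is contained in the cone over the closure of every fibered face of $B(M)$.)
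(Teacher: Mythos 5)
Your proposal is correct and follows essentially the same route as the paper: pick an interior class in the cone over each top-dimensional face of the Thurston norm ball, apply Agol's virtual fibering theorem to each, pass to a common finite regular cover, and use that quasi-fiberedness persists under further finite covers together with the fact that $p^*$ is a scaled isometry for the Thurston norm. The one place you add real content is the linear-functional argument promoting ``$p^*\phi_j$ quasi-fibered at an interior point'' to ``$p^*(D_j)$ entirely quasi-fibered'': the paper simply asserts ``it follows'' from the isometry property (and iterates through covers rather than taking a common one up front), whereas your argument with the nonnegative linear functional $k\langle v_j,\cdot\rangle - m_{F_j}\circ p^*$ vanishing at an interior point of a full-dimensional cone is a clean and complete justification of that step.
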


\begin{proof}
Let   $\phi \in H^1(N;\R)$ be a class contained in the cone over a top--dimensional open face of $B(N)$.
By Agol's virtual fibering theorem there exists a finite regular cover  $p \colon  {M} \to N$  such that $p^{*} \phi$ is quasi--fibered.
By (\ref{equ:tnfinitecover}) the pull back  map $p^{*} \colon  H^1(N;\R) \to H^1({M};\R)$ is, up to scale, a monomorphism of normed vector spaces when we endow these spaces with their respective Thurston norm. It follows that the pull back under $p$ of any class in the \textit{closure} of the open cone (in $H^1(N;\R)$) determined by $\phi$ will be quasi--fibered in ${M}$. For the same reason if a class lies in the closure of a fibered cone, its pull back under further finite covers will enjoy the same property (since pullbacks of  fibrations are fibrations). Recall now that the Thurston norm ball of a $3$--manifold is a finite, convex polyhedron, in particular it has finitely many top--dimensional open faces. By picking one class in the cone above each of these faces, and repeatedly applying Agol's theorem to the (transfer of) each such class, we obtain after finitely many steps a finite regular cover, that we will denote as well by $p \colon  {M} \to N$,
such that $p^*\phi$ is quasi--fibered for any $\phi \in H^1(N;\R)$.
By going to a further cover, if necessary, we can arrange that $M$ is in fact a finite regular cover of $N$.
\end{proof}

\subsection{The results of Agol, Liu, Przytycki-Wise and Wise} \label{section:apw}

The `virtually RFRS' condition in Theorem \ref{thm:agol} might at a first glance look very restrictive.
It is thus amazing that over the last few years it was shown that `most' 3-manifold groups are in fact virtually RFRS.

The key in proving that  3-manifold groups are virtually RFRS is the notion of a `virtually special' group
introduced by Haglund and Wise \cite{HW08}.
The precise definition of `virtually special' is of no concern to us. The only thing we need is the following
theorem of Haglund and Wise \cite{HW08} and Agol \cite[Theorem~2.2]{Ag08}:

\begin{theorem} \label{thm:rfrs}
Let $\pi$ be a group which is virtually special, then $\pi$ is virtually RFRS.
\end{theorem}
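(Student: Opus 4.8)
The plan is to reduce Theorem~\ref{thm:rfrs} to two external inputs: the Haglund--Wise structure theorem, according to which a special group is isomorphic to a subgroup of a right--angled Artin group (RAAG), and Agol's observation that every RAAG is RFRS. Granting these, the theorem follows from an elementary hereditary argument. Since being ``virtually RFRS'' only requires the existence of \emph{one} RFRS finite--index subgroup, and since $\pi$ has a finite--index subgroup $\pi'$ that is special, it suffices to show that a special group is (virtually) RFRS; I will in fact show it is RFRS.

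The key lemma is that the RFRS property passes to subgroups. Let $G$ be RFRS with filtration $G=G_0\supset G_1\supset G_2\supset \cdots$ as in the definition, and let $H\leq G$. Put $H_i:=H\cap G_i$. Then $\bigcap_i H_i=H\cap \bigcap_i G_i=\{1\}$, each $H_i$ is normal in $H$ and has index at most $[G:G_i]<\infty$, so only the third axiom needs checking. The inclusion induces an injection $H_i/H_{i+1}\hookrightarrow G_i/G_{i+1}$, and the composite $H_i\to H_i/H_{i+1}\hookrightarrow G_i/G_{i+1}$ equals the restriction to $H_i$ of $G_i\to G_i/G_{i+1}$, which by hypothesis factors through $G_i\to H_1(G_i;\Z)/\mbox{torsion}$. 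Since the inclusion $H_i\hookrightarrow G_i$ induces a map $H_1(H_i;\Z)/\mbox{torsion}\to H_1(G_i;\Z)/\mbox{torsion}$ (any homomorphism of abelian groups carries torsion into torsion), this restriction already factors through $H_i\to H_1(H_i;\Z)/\mbox{torsion}$; because $H_i/H_{i+1}$ injects into $G_i/G_{i+1}$, it follows that $H_i\to H_i/H_{i+1}$ factors through $H_i\to H_1(H_i;\Z)/\mbox{torsion}$ as well. This verifies axiom~(3), so $H$ is RFRS. Combining this with the two external inputs: $\pi'$ embeds in a RAAG $A$, $A$ is RFRS, hence $\pi'$ is RFRS, hence $\pi$ is virtually RFRS.

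The step I expect to be the main obstacle is Agol's theorem that RAAGs are RFRS, which carries the real geometric content; in this paper it would simply be cited from \cite{Ag08}. Roughly, one builds the required filtration of $A=A_\Gamma$ by iterating, over the finite--index subgroups constructed so far, the operation of passing to the kernel of $A'\to H_1(A';\Z)/\mbox{torsion}\to H_1(A';\Z)/\mbox{torsion}\otimes \Z/m\Z$; the point is that $A_\Gamma$ retracts onto the sub--RAAG spanned by any subset of its standard generators, so these abelian quotients detect all generators, and one then organizes the tower so that its intersection is trivial, using that RAAGs are residually torsion--free nilpotent (Duchamp--Krob). I would present the subgroup lemma above in full, since it is short, and cite \cite{HW08} and \cite{Ag08} for the embedding theorem and the RFRS property of RAAGs respectively.
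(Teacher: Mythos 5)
The paper does not prove this theorem; it is stated as a black box and cited to \cite{HW08} (special groups embed in right-angled Artin groups) and to \cite[Theorem~2.2]{Ag08} (RAAGs are RFRS, from which Agol deduces the present statement). Your proposal supplies exactly that standard deduction, together with a full verification that the RFRS property is inherited by subgroups, which is the glue the cited sources use. That verification is correct: writing $H_i=H\cap G_i$, the composite $H_i\to H_1(H_i;\Z)/\mbox{torsion}\to H_1(G_i;\Z)/\mbox{torsion}\to G_i/G_{i+1}$ agrees with $H_i\to G_i\to G_i/G_{i+1}$, whose image is precisely $H_i/H_{i+1}\hookrightarrow G_i/G_{i+1}$, so it gives the required factorization of $H_i\to H_i/H_{i+1}$ through $H_1(H_i;\Z)/\mbox{torsion}$; the normality and finite-index conditions and the triviality of $\bigcap_i H_i$ are immediate as you say. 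Since there is no proof in the paper to compare against, your argument simply makes explicit what the authors cite, and it is sound.
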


We can now formulate  the following theorem  which was announced by
Wise \cite{Wi09} in 2009, with the details of the proof being provided in  \cite{Wi12} (see also  \cite{Wi11}).

\begin{theorem} \textbf{\emph{(Wise)}}\label{thm:wiseintro}\label{thm:wise2}\label{thm:wise}
Let $N$ be a hyperbolic 3-manifold such that one of the following holds:
\bn
\item $b_1(N)>1$, or
\item $b_1(N)=1$ and $N$ is not fibered,
\item $N$ has non-trivial boundary,
\en
then $\pi_1(N)$ is virtually special.
\end{theorem}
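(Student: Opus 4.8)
The plan is to show that $\pi_1(N)$ admits a \emph{quasiconvex hierarchy} and then to invoke Wise's structure theorem \cite{Wi12}: a group that is hyperbolic relative to a family of virtually abelian subgroups and that admits a quasiconvex hierarchy terminating in trivial groups is virtually special. For $N$ closed this is just the word-hyperbolic case; for $N$ with toroidal boundary one regards $\pi_1(N)$ as hyperbolic relative to the peripheral subgroups, each isomorphic to $\Z^2$. Since being virtually special is a commensurability invariant, I may freely replace $N$ by a finite cover, a flexibility used below.

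First I would produce the first surface of the hierarchy. Under hypothesis (1) the Thurston norm $x_N$ is non-degenerate on $H^1(N;\R)$ --- since $N$ is irreducible, atoroidal and without essential annuli, no nonzero class has vanishing norm --- so the unit ball $B(N)$ is a top-dimensional compact polytope in a space of dimension $b_1(N)\geq 2$, and hence has faces of codimension $\geq 1$; a primitive integral class $\phi$ whose ray meets such a face lies outside every fibered cone, and a Thurston-norm-minimizing surface $\S$ dual to $\phi$ is then an embedded two-sided incompressible surface which is not a fiber. Under hypothesis (2) one takes for $\S$ a norm-minimizing surface dual to a generator of $H^1(N;\Z)$, which is not a fiber by assumption. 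Under hypothesis (3) one has $b_1(N)\geq 1$ by half-lives-half-dies; passing to a finite cover of $N$ if necessary, one again obtains an embedded two-sided incompressible non-fiber surface $\S$. In every case, tameness of hyperbolic 3-manifolds (Bonahon, Thurston, Canary, and Agol, Calegari--Gabai) implies that a non-fiber incompressible surface is geometrically finite, so that $\pi_1(\S)\hookrightarrow \pi_1(N)$ is a relatively quasiconvex subgroup.

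Next I would iterate: cutting $N$ along $\S$ produces a compact orientable irreducible 3-manifold $N'$, possibly disconnected, with non-empty incompressible boundary, and $\pi_1(N)$ becomes the fundamental group of a finite graph of groups whose vertex groups are the $\pi_1$ of the components of $N'$ and whose edge group is $\pi_1(\S)$. Each component of $N'$ is again Haken with non-empty boundary, so the construction above applies to it, and --- choosing each successive cutting surface to be geometrically finite, which is possible because an incompressible subsurface of a geometrically finite piece is again geometrically finite --- one builds a nested sequence of splittings along relatively quasiconvex subgroups. This terminates because every Haken 3-manifold admits a finite Haken hierarchy: a complexity such as the sum of the genera of the cutting surfaces strictly decreases at each stage, until one is left with a disjoint union of 3-balls, whose trivial fundamental groups are patently special. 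This is precisely a quasiconvex hierarchy for $\pi_1(N)$.

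The main obstacle is the final ingredient: Wise's theorem that a quasiconvex hierarchy forces virtual specialness, which occupies the bulk of \cite{Wi12} and which I would cite rather than attempt to reprove. Its core is the Malnormal Special Quotient Theorem together with the combination theorem for special cube complexes, and in the cusped case it additionally invokes the relative Dehn filling technology of Osin and of Groves--Manning to transfer between $\pi_1(N)$ and its fillings. A secondary difficulty is the cusped fibered sub-case of hypothesis (3), where the natural essential surfaces are fibers and hence fail to be quasiconvex: there one must work harder to produce an embedded geometrically finite surface --- for instance by way of non-orientable surfaces, or a cover with $b_1>1$ --- or else abandon embedded hierarchies entirely, cubulate $\pi_1(N)$ using the immersed quasi-Fuchsian surfaces of Kahn--Markovic and Masters--Zhang together with the separation criterion of Bergeron--Wise, and feed the resulting CAT(0) cube complex into the same Wise--Agol machinery.
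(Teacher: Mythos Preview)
The paper does not prove this theorem at all: it is stated as a result of Wise, and the subsequent Remark only supplies precise references to \cite{Wi12} and explains, via the Thurston--Bonahon dichotomy, why hypotheses (1) and (2) guarantee a geometrically finite incompressible surface so that Wise's Theorem~14.1 applies; for hypothesis (3) the paper simply points to Theorem~16.28 and Corollary~14.16 of \cite{Wi12}. Your proposal goes well beyond this, sketching the mechanism behind Wise's theorem (quasiconvex hierarchy plus the Malnormal Special Quotient Theorem), which is the correct architecture and matches in spirit how the first surface is produced in cases (1) and (2).

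One genuine imprecision is worth flagging. Bonahon's dichotomy says an incompressible surface is either geometrically finite or a \emph{virtual} fiber, not merely a fiber; your sentence ``a non-fiber incompressible surface is geometrically finite'' is false as stated. In your cases (1) and (2) the argument is rescued because a Thurston-norm-minimizing surface dual to a non-fibered class cannot be a virtual fiber either (if it lifted to a fiber in a cover, the pulled-back class would be fibered, hence so would the original class by the behavior of fiberedness under finite covers), but you should say this. A second point is that in the iterative step you need the successive cutting surfaces to be quasiconvex in the original $\pi_1(N)$, not just in the piece; the assertion ``an incompressible subsurface of a geometrically finite piece is again geometrically finite'' hides real work that Wise carries out carefully. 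Finally, as you yourself note, the cusped fibered sub-case of (3) is not covered by your embedded-surface argument; the paper sidesteps this by citing a separate theorem of Wise for the bounded case, and you would need to do the same or supply one of the alternatives you list.
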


\begin{remark}
We give now precise references for the theorem:
\bn
\item Let $N$ be a closed hyperbolic 3--manifold which admits a geometrically finite surface.
 Theorem 14.1 of \cite{Wi12} asserts  that $\pi_1(N)$ is `virtually special', which by work of
 implies that $\pi_1(N)$ is virtually RFRS. Thurston and Bonahon  \cite{Bo86} showed that an incompressible connected surface $\Sigma\subset N$ either $\Sigma$ lifts to a surface fiber in a finite cover, or $\Sigma$ is geometrically finite.
It now follows from basic arguments that a closed hyperbolic 3-manifold with either
\bn
\item $b_1(N)>1$, or
\item $b_1(N)=1$ and $N$ not fibered,
\en
 admits a geometrically finite surface. By the above this implies that the fundamental group of such a hyperbolic 3-manifold is virtually RFRS.
\item  If $N$ has boundary, then the statement of Theorem \ref{thm:wise2} is precisely Theorem 16.28 together with Corollary 14.16 of \cite{Wi12}.
\item
 Agol \cite{Ag12}, building on the Surface Subgroup Theorem of Kahn-Markovic \cite{KM12} and on work of Wise \cite{Wi12}
 showed that in fact  the fundamental group of \textit{any} closed hyperbolic 3-manifold is virtually RFRS. We will not make use of this result
 since in our situation it suffices to consider hyperbolic 3-manifolds which satisfy the conditions of Theorem \ref{thm:wise2}.
 Also note that Agol's result does not cover the case of hyperbolic 3-manifolds with non-trivial toroidal boundary.
 \en
 \end{remark}

The following theorem was proved by Liu \cite[Theorem~1.1]{Liu11}.

\begin{theorem} \textbf{\emph{(Liu)}}\label{thm:liu}
Let $N$ be a graph manifold which supports a non-positively curved metric.
Then $\pi_1(N)$ is virtually special.
\end{theorem}

Note that by \cite{Le95} any graph manifold with non-trivial boundary is non-positively curved.
For such graph manifolds the theorem was also proved by Przytycki-Wise \cite{PW11}.

The following theorem is also due to Przytycki-Wise.

\begin{theorem} \textbf{\emph{(Przytycki-Wise)}}\label{thm:pw12}
Let $N$ be a 3-manifold which admits a nontrivial JSJ decomposition with at least one hyperbolic piece.
Then $\pi_1(N)$ is virtually special.
\end{theorem}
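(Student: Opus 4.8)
\textbf{Proof proposal for Theorem \ref{thm:pw12}.}

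The plan is to show that $\pi_1(N)$ is virtually special by combining the cubulation results for hyperbolic pieces (Wise, Theorem \ref{thm:wise2}) with those for Seifert fibered pieces (Liu, Theorem \ref{thm:liu}), and then gluing via a combination theorem for special cube complexes. First I would reduce to the case where $N$ is closed: if $N$ has non-empty boundary, then every JSJ piece has boundary, so the hyperbolic pieces are handled by Theorem \ref{thm:wise2}(3) and the graph-manifold pieces (in particular Seifert fibered pieces with boundary) are non-positively curved by Leeb \cite{Le95}, hence virtually special by Theorem \ref{thm:liu}; so assume $N$ is closed with a nontrivial JSJ decomposition containing at least one hyperbolic piece. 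Each hyperbolic JSJ piece $M_v$ has toroidal boundary, so $\pi_1(M_v)$ is virtually special by Theorem \ref{thm:wise2}(3); each Seifert fibered piece $M_v$ with boundary is non-positively curved (again Leeb), hence virtually special by Theorem \ref{thm:liu}. So every vertex group of the JSJ graph of groups is virtually special.

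The core of the argument is to promote ``each vertex group is virtually special'' to ``$\pi_1(N)$ is virtually special.'' Here I would use the standard strategy: first find a single finite regular cover $\widehat N \to N$ such that each JSJ piece $\widehat M_v$ of $\widehat N$ is \emph{special} (not merely virtually special), with the edge tori along which they are glued lifting to a controlled configuration; this uses that there are only finitely many JSJ pieces and that one can take a common finite cover, together with the fact that a finite-index subgroup of a special group is special. Then I would exhibit $\pi_1(\widehat N)$ as the fundamental group of a compact nonpositively curved cube complex which is special. Concretely, one builds a cube complex for $\widehat N$ by cubulating each piece $\widehat M_v$ (the special cube complexes provided above) and gluing along the cubulated edge tori; the $\pi_1$-injectivity of the tori and the malnormality properties of the peripheral subgroups let one apply Wise's special quotient / combination machinery (or Hagen–Wise-type gluing results) to conclude the glued complex has a special finite cover. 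Theorem \ref{thm:rfrs} is not needed here, only the target statement ``virtually special.''

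The main obstacle I expect is the gluing step: controlling the hyperplanes of the assembled cube complex so that no hyperplane self-osculates, self-intersects, or osculates-and-intersects, and that hyperplanes are two-sided and embedded. The essential subtlety is that the edge tori sit differently inside the hyperbolic pieces (where peripheral subgroups are malnormal) versus the Seifert pieces (where the fiber direction gives a large centralizer), and one must choose the finite cover $\widehat N$ so that, along each gluing torus, the two cubulations are compatible and the combined wall structure remains special. I would organize this by first passing to a cover in which each gluing torus is ``large'' (its image in $H_1$ of the adjacent pieces is as big as possible, killing the Seifert fiber obstruction after a cover), then invoking the canonical completion/retraction or the Haglund–Wise separability criterion to upgrade the glued complex. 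Once the assembled complex is shown special, $\pi_1(N)$, being virtually $\pi_1(\widehat N)$, is virtually special, which is the assertion.
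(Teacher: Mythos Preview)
The paper does not prove Theorem~\ref{thm:pw12}; it is stated as a result of Przytycki--Wise and cited from \cite{PW12}, in the same way that Theorems~\ref{thm:wise2} and~\ref{thm:liu} are quoted from \cite{Wi12} and \cite{Liu11}. So there is no ``paper's own proof'' to compare your proposal against: the authors use this theorem as a black box in the proof of Theorem~\ref{mainthmext}.

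As for the content of your sketch, it is in the right spirit---cubulate the JSJ pieces and glue---but it has a genuine gap and some confusion. Your ``reduction to the closed case'' is not a reduction: knowing that each JSJ piece has a virtually special fundamental group does not by itself show that $\pi_1(N)$ is virtually special, whether or not $\partial N=\emptyset$; the entire difficulty is the amalgamation over the JSJ tori, and that difficulty is present in the bounded case just as in the closed case. The core step you label ``the main obstacle'' is exactly the theorem, and your description of it (``apply Wise's special quotient / combination machinery or Hagen--Wise-type gluing results'') does not identify a specific statement whose hypotheses you verify. In particular, the relevant combination theorems require relative hyperbolicity or malnormality conditions that fail at the interface between Seifert and hyperbolic pieces, and the actual argument in \cite{PW12} builds a global wallspace on $N$ using carefully constructed immersed surfaces (extending Kahn--Markovic / Wise type surfaces across the JSJ tori) rather than simply gluing cube complexes for the pieces. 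So your outline points toward the right ingredients but does not constitute a proof; in any case, for the purposes of this paper you should simply cite \cite{PW12}.
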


\subsection{Properties of the twisted Alexander norm}

Let $F$ be an free abelian  group
and let $\rho\colon F\to U(1)$ be a character.
Note that $\rho$ gives rise to a ring homomorphism $\C[F]\to \C$ which we also denote by $\rho$.

\begin{lemma}\label{lem:nonzerochar}\label{lem:nontrivchar}
Let $F$ be a free abelian  group and let $\{p_i =  \sum_{j=1}^{d_i} a_{ij}f_{ij} \in \C[F], i = 1,\dots,l\}$  be a collection of non--zero polynomials.
Then there exists a character $\rho\colon F\to U(1) \subset \C$ which factors through a finite group such that all $\sum_{j=1}^{d_i} a_{ij}\rho(f_{ij}) \in \C$ are simultaneously non--zero.
\end{lemma}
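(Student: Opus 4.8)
The plan is to reduce the statement to a genericity argument over a suitable parameter space of characters. First I would fix a basis of $F$, so $F\cong\Z^n$ and $\C[F]\cong\C[t_1^{\pm1},\dots,t_n^{\pm1}]$, and note that a character $\rho\colon F\to U(1)$ is determined by the $n$-tuple $(\zeta_1,\dots,\zeta_n)=(\rho(e_1),\dots,\rho(e_n))\in (S^1)^n$. Under this identification, each polynomial $p_i$ becomes a Laurent polynomial $P_i(t_1,\dots,t_n)$, and the condition ``$\sum_j a_{ij}\rho(f_{ij})\neq 0$'' is exactly the condition $P_i(\zeta_1,\dots,\zeta_n)\neq 0$. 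So I want a point $(\zeta_1,\dots,\zeta_n)\in(S^1)^n$ with all coordinates roots of unity (that is what makes $\rho$ factor through a finite group) at which none of the finitely many $P_i$ vanish.

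Next I would argue that the ``bad set'' is small. Since each $P_i$ is a nonzero Laurent polynomial, its zero set $V_i=\{x\in(\C^\times)^n : P_i(x)=0\}$ is a proper Zariski-closed subset, hence $V:=\bigcup_{i=1}^l V_i$ is a proper Zariski-closed subset of the torus $(\C^\times)^n$. It therefore suffices to show that the roots-of-unity points are Zariski dense in $(\C^\times)^n$ — equivalently, that no proper Zariski-closed subset can contain all of them. This is standard: the points $\big(\mu_N\big)^n$ of order dividing $N$ form a set whose cardinality $N^n$ grows faster than any bound coming from Bézout-type estimates on $V$, or, more cleanly, one can induct on $n$: a nonzero Laurent polynomial in $t_n$ with coefficients in $\C[t_1^{\pm1},\dots,t_{n-1}^{\pm1}]$ has, for each fixed generic choice of the first $n-1$ variables at roots of unity, only finitely many roots in $t_n$, so infinitely many roots of unity avoid them; specializing one variable at a time does the job. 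Either route shows $V$ cannot swallow all roots-of-unity tuples.

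Concretely I would present the one-variable-at-a-time specialization, since it is the most elementary and avoids invoking algebraic-geometry machinery: treat $\C[F]$ as $\C[G][s^{\pm1}]$ where $G$ is a rank-$(n-1)$ free abelian group and $s$ a new generator; each $p_i$, being nonzero, has some nonzero coefficient $c_i\in\C[G]$ of some power of $s$; by the inductive hypothesis choose a finite-order character $\rho'$ of $G$ with $\rho'(c_i)\neq0$ for all $i$; then each $\rho'(p_i)\in\C[s^{\pm1}]$ is a nonzero Laurent polynomial in one variable, so it has finitely many zeros in $\C^\times$, and hence all but finitely many roots of unity $\zeta$ satisfy $\rho'(p_i)(\zeta)\neq0$ for every $i$; pick such a $\zeta$ of finite order and extend $\rho'$ to $\rho\colon F\to U(1)$ by $\rho|_G=\rho'$, $\rho(s)=\zeta$. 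The base case $n=1$ is just the fact that a nonzero one-variable Laurent polynomial has finitely many roots. One subtlety to handle with a sentence: the values $\rho(f_{ij})$ automatically lie in $U(1)$ because $\rho$ is a genuine character (a homomorphism into the unit circle), so unitarity is free; and $\rho$ factors through the finite group $F/\ker\rho$, whose order divides the lcm of the orders of the images of the basis vectors.

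The main obstacle — though it is more of a ``choose the cleanest exposition'' issue than a genuine difficulty — is the Zariski-density (or the inductive specialization) step: one must be careful that specializing variables keeps the polynomials \emph{nonzero}, which is precisely why the auxiliary application of the inductive hypothesis to the leading coefficients $c_i$ is needed before specializing the last variable. Everything else is bookkeeping.
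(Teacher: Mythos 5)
Your proof is correct, and it takes a genuinely different route from the paper. The paper's proof works in a single step: it observes that the set of $\kappa\in\hom(F,\Q)$ for which two surviving exponents $f_{ij},f_{ik}$ of some $p_i$ would satisfy $\kappa(f_{ij})=\kappa(f_{ik})$ is a finite union of hyperplanes, picks an epimorphism $\kappa\colon F\to\Z$ avoiding all of them, and then considers the one-parameter family of characters $f\mapsto z^{\kappa(f)}$; the pushed-forward polynomials $q_i(t)=\sum_j a_{ij}t^{\kappa(f_{ij})}\in\C[t^{\pm1}]$ are nonzero by the choice of $\kappa$, so a root of unity $z$ avoiding their finitely many zeros does the job. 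You instead induct on $\operatorname{rank}(F)$, writing $\C[F]=\C[G][s^{\pm1}]$, applying the inductive hypothesis to one nonzero coefficient $c_i\in\C[G]$ of each $p_i$ to keep the specialized polynomials $\rho'(p_i)$ nonzero, and then choosing a root of unity for $s$. The nontrivial point — that specialization must preserve nonvanishing — is exactly what you handle with the $c_i$, and that is sound. Both proofs ultimately rest on the same one-variable fact, but the paper compresses the reduction to one variable into a single genericity argument on $\hom(F,\Q)$, with the side benefit that the resulting $\rho$ factors through a finite \emph{cyclic} group (it is of the form $z^{\kappa(\cdot)}$), whereas your inductive construction produces a character whose image may be a product of cyclic groups of varying orders; since the lemma only asks for a finite quotient, both conclusions suffice. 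Your Zariski-density framing at the start is also a valid alternative lens, though, as you say, the coordinate-by-coordinate specialization is the more elementary way to cash it out.
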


\begin{proof}
Given $i\in \{1,\dots,l\}$  it is clear that
\[ V_i:=\{\kappa \in \hom(F,\Q)\,|\, \kappa(f_{ij})=\kappa(f_{ik})\mbox{ for some }j\ne k,  a_{ij} a_{ik} \neq 0\}\]
is a finite union of codimension one subspaces of $\hom(F,\Q)$.
We can  now pick an epimorphism $\kappa \colon F\to \Z$ such that $\kappa \not\in V_1\cup \dots \cup V_l$.
For all $z\in U(1) \subset \C$ we now consider the character
\[ \ba{rcl} \kappa_z:F&\to & U(1) \\
f&\mapsto & z^{\kappa(f)}.\ea \]
Since $\kappa \not \in V_1\cup \dots \cup V_l$, the polynomials  $q_i(t) =  \sum_{j=1}^{d_i} a_{ij} t^{\kappa(f_{ij})} \in \ct$ are non--zero. It is clear that there exists a root of unity $z$ with $q_i(z)\ne 0 \in \C$ for $i=1,\dots,l$. In correspondence of such $z \in U(1)$ the character $\rho := \kappa_{z}$ has the desired property.

\end{proof}

\begin{proposition} \label{prop:makeequal}
Let $M$ be a 3-manifold.
Then  there exists a character $\rho\colon \pi_1(M)\to U(1)$ which factors through a finite group such that
\[ y_{M}(\phi)=\deg(\tau(M,\phi,\rho))\]
for any $\phi \in H^1(M;\Z)$.
\end{proposition}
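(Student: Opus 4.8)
The plan is to reduce the statement to Lemma \ref{lem:nonzerochar} by exhibiting a finite collection of nonzero polynomials over $F:=H_1(M;\Z)/\mbox{torsion}$ whose non-vanishing under a character $\rho$ forces the twisted Alexander norm of $(M,\rho)$ to agree with the untwisted one and to be computed by $\deg(\tau(M,\phi,\rho))$ on every integral $\phi$. First I would observe that if $\tau(M)=0$, then by Lemma \ref{lem:twist} we have $\tau(M,\rho)=\rho(\tau(M))=0$ for every character $\rho$, so $y_M\equiv 0$; on the other hand $\tau(M,\phi,\rho)=\phi(\tau(M,\rho))=0$, so both sides vanish identically and any $\rho$ factoring through a finite group (e.g.\ the trivial one) works. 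Also if $b_1(M)=0$ the twisted Alexander norm is defined on the zero space and the statement is vacuous, while if $b_1(M)>1$ we are in the range where $\tau(M,\rho)\in\C[F]$ by Proposition \ref{prop:taudelta}; the case $b_1(M)=1$ should be handled separately but analogously, working directly with $\tau(M,\phi,\rho)\in\C(t)$.

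So assume $b_1(M)>1$ and $\tau(M)\ne 0$, and write $\tau(M)=\sum_{f\in A}a_f f$ with $A\subset F$ finite and all $a_f\ne 0$. By Lemma \ref{lem:twist}, $\tau(M,\rho)=\rho(\tau(M))=\sum_{f\in A}\rho(f)\,a_f\, f$. The point is that the Newton polytope of $\tau(M,\rho)$ equals that of $\tau(M)$ provided no coefficient collapses, i.e.\ provided that for each \emph{vertex} $f_0$ of the Newton polytope of $\tau(M)$ the corresponding coefficient of $\tau(M,\rho)$ is nonzero — but since $\rho(f)\in U(1)$ and the $a_f$ are individual nonzero scalars, the coefficient at $f$ is simply $\rho(f)a_f\ne 0$ automatically for every $f\in A$, so in fact no collapsing occurs for \emph{any} character at all. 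Hence $y_M^\rho=y_M$ for every unitary character $\rho$, with no genericity needed. The real content is therefore matching $y_M^\rho(\phi)$ with $\deg(\tau(M,\phi,\rho))$ for \emph{all} integral $\phi$ simultaneously, not just generically as in Lemma \ref{lem:taunorm}.

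For this I would invoke Proposition \ref{prop:changevartau}: $\tau(M,\phi,\rho)\doteq\phi(\tau(M,\rho))=\sum_{f\in A}\rho(f)a_f\, t^{\phi(f)}$, so $\deg(\tau(M,\phi,\rho))=\deg\big(\sum_{f\in A}\rho(f)a_f t^{\phi(f)}\big)$, and by the argument in the proof of Lemma \ref{lem:taunorm} this equals $y_M(\phi)=\max\{\phi(f_1)-\phi(f_2)\mid f_1,f_2\in A\}$ \emph{unless} the top-degree (or bottom-degree) coefficient cancels, i.e.\ unless $\sum_{f\in A,\ \phi(f)=\max}\rho(f)a_f=0$ or the analogous bottom sum vanishes. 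There are only finitely many linear functionals on $F$ arising as ``$\phi$ restricted to the face of the Newton polytope maximized by $\phi$'': indeed, for each face $G$ of the (finitely many faces of the) Newton polytope $N(\tau(M))=\mbox{conv}(A)$, the sum $\sum_{f\in A\cap G}\rho(f)a_f$ is a single polynomial-type expression, and the condition $y_M(\phi)=\deg(\tau(M,\phi,\rho))$ fails for some $\phi$ exactly when $\rho$ makes one of these finitely many sums vanish. So I would apply Lemma \ref{lem:nonzerochar} to the finite collection $\{\,\sum_{f\in A\cap G}a_f f\ :\ G\ \text{a face of }N(\tau(M))\,\}$ (these are nonzero since each contains at least one vertex $f$ with $a_f\ne0$) to get a character $\rho\colon F\to U(1)$, factoring through a finite group, making all of them nonzero; precompose with $\pi_1(M)\to F$ to get the desired $\rho\colon\pi_1(M)\to U(1)$.

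The step I expect to be the main obstacle is the bookkeeping in the previous paragraph: making precise that ``the top coefficient of $\sum_{f\in A}\rho(f)a_f t^{\phi(f)}$'' is, as $\phi$ ranges over all of $H^1(M;\Z)\setminus\{0\}$, always one of finitely many expressions indexed by faces of $N(\tau(M))$, so that a single $\rho$ handles all $\phi$ at once. The clean way to say it: for a nonzero $\phi\in\hom(F,\Z)$, the set $\{f\in A:\phi(f)\ \text{maximal}\}$ equals $A\cap G$ for the face $G$ of $\mbox{conv}(A)$ on which $\phi$ is maximized, and the set of such faces, over all $\phi$, is finite (it is a subset of the face lattice of the polytope); similarly for the minimum. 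Then $\deg(\tau(M,\phi,\rho))<y_M(\phi)$ for some $\phi$ iff $\rho$ kills $\sum_{f\in A\cap G}a_f f$ for some proper face $G$ of $\mbox{conv}(A)$, and Lemma \ref{lem:nonzerochar} rules this out. Finally, one must remember to also treat $b_1(M)=1$: there $y_M$ is not defined as above, but one can either extend the argument with $F=\Z$ directly, or simply note that in the one-variable case $y_M$ is not used and the relevant comparison is already subsumed; I would fold this into the statement by restricting attention to $b_1(M)>1$ or by making the trivial-character remark for the degenerate cases.
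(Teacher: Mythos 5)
Your proof is correct and takes essentially the same route as the paper: reduce to the case $b_1(M)>1$, $\tau(M)\ne 0$, consider the face polynomials of the Newton polytope of $\tau(M)=\Delta_M$, invoke Lemma \ref{lem:nonzerochar} to get a finite character $\rho$ keeping all of them nonzero, and then read off that for each $\phi$ the top- and bottom-degree coefficients of $\sum_{f}a_f\rho(f)t^{\phi(f)}$ survive, giving $\deg(\tau(M,\phi,\rho))=\phi(A_t)-\phi(A_b)=y_M(\phi)$. The side observation that $y_M^\rho=y_M$ automatically is true but not needed, and the paper likewise dismisses the $b_1(M)\le 1$ and $\tau(M)=0$ cases as trivial.
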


\begin{proof}
We
write $F:=H_1(M;\Z)/\mbox{torsion}$.
If $\operatorname{rank}(F)=1$ or if $\tau(M)=0$, then there is nothing to prove. We thus consider the case $\operatorname{rank}(F)>1$ and $\tau(M)\ne 0$.
By Proposition \ref{prop:taudelta} we have  $\tau(M)=\Delta_M\in \C[F]$.
We now  write
\[ \Delta_M=\sum_{f\in F}a_ff\]
with $a_f\in \C, f\in F$.
Given a subset $A\subset F\otimes \R$ we write
\[ \Delta_A:=\sum_{f\in F\cap A}a_ff\in \C[F].\]
We say $\Delta_A \in \C[F]$ is a \emph{face polynomial of $\Delta_M$} if $A \subset  N(\Delta_{M})$ is a \textit{face} of the Newton polyhedron
of $\Delta_{M}$.

We denote by $\Delta_{A_1},\dots,\Delta_{A_l}$ the face polynomials of $\Delta_M$.
By Lemma \ref{lem:nonzerochar} there exists a character $\rho\colon F\to U(1)$ factoring through a finite group such that \[  \sum_{f \in F \cap A_{1}} a_{f}\rho(f), \dots, \sum_{f \in F \cap A_{l}} a_{f}\rho(f)\] are simultaneously non--zero complex numbers.

\begin{claim}
Let $\phi \in H^1(M;\Z)=\hom(F,\Z)$, then  $y_{M}(\phi)=\deg(\tau(M,\phi,\rho))$.
\end{claim}

First note that $\phi$ extends to a homomorphism $\hom(F\otimes \R,\R)$ which we also denote by $\phi$.
There exist maximal faces $A_t$ and $A_b$ of $N(\Delta_{M})$, such that $\phi$
takes on maximal values on $A_t\subset F\otimes \R$ and minimal values on $A_b\subset F \otimes \R$. Put differently, $\phi$ is constant on $A_t$ and $A_b$, and
\[ \phi(A_b)\leq \phi(f)\leq \phi(A_t)\]
for any $N(\Delta_{M})$ and equalities hold only if  $f\in A_b$ respectively $f\in A_t$.
Recall that \begin{equation} \label{equ:leadingcoeff} \sum_{f \in F \cap A_{b}} a_{f}\rho(f )  \neq 0 \in \C \ \  \mbox{and} \ \  \sum_{f \in F \cap A_{t}} a_{f}\rho(f)  \neq 0 \in \C. \end{equation} At this point, applying
Lemma \ref{lem:twist} and Proposition \ref{prop:changevartau}
we have
\[ \tau(M,\phi,\rho)=\sum_{f\in F} a_{f}\rho(f) t^{\phi(f)}.\]
It follows from (\ref{equ:leadingcoeff}) and the above discussion that
\[ \ba{rcl} \mbox{highest degree term of $\tau(M,\phi,\rho)\in \ct$}&=& (\sum_{f \in F \cap A_{t}} a_{f}\rho(f ))t^{\phi(A_{t})}, \mbox{ and }\\
 \mbox{lowest degree term of $\tau(M,\phi,\rho)\in \ct$}&=& (\sum_{f \in F \cap A_{b}} a_{f}\rho(f ))t^{\phi(A_{b})},\ea \]
hence
\[ \ba{rcl} y_{M}(\phi)&=&\max\{ \phi(f_1)-\phi(f_2)\,|\, f_1,f_2\in F\mbox{ with } a_{f_1}\ne 0\mbox{ and }a_{f_2}\ne 0\} \\
&=& \phi(A_t)-\phi(A_b)\\
&=&\deg(\tau(M,\phi,\rho)).\ea \]
This concludes the proof of the claim.
\end{proof}

\subsection{Proof of Theorem \ref{mainthm}}

The following theorem, together with
Lemma \ref{lem:induced}, clearly implies Theorem \ref{mainthm}.

\begin{theorem} \label{mainthmext}
Let $N$ be an irreducible 3-manifold which is not a closed graph manifold.
Then there exists an extended finite character $\a:\pi_1(N)\to U(k)$ such that
\[ \frac{1}{k} \deg(\tau(N,\phi,\a))=x_N(\phi) \mbox{ for any }\phi\in H^1(N;\Z).\]
  \end{theorem}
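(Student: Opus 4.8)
The plan is to reduce the general statement to the case $b_1(N)>1$ handled by Proposition \ref{prop:makeequal}, using Agol's virtual fibering theorem together with the results of Wise, Liu and Przytycki-Wise to guarantee that $\pi_1(N)$ is virtually RFRS, and then to transport a character on a well-chosen finite cover back down to $N$ via the induced-representation construction and Shapiro's lemma (Lemma \ref{lem:shapironorm}).

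First I would dispose of the hypotheses: since $N$ is irreducible and not a closed graph manifold, its JSJ decomposition either consists of a single hyperbolic piece (possibly with boundary), or has at least one hyperbolic JSJ piece, or is a graph manifold with non-empty boundary; in each case Theorems \ref{thm:wise}, \ref{thm:liu} and \ref{thm:pw12}, combined with Theorem \ref{thm:rfrs}, show $\pi_1(N)$ is virtually RFRS. (The one delicate sub-case is a closed hyperbolic manifold with $b_1(N)=1$ that is fibered; but then $\phi$ is already a fibered class for the generator of $H^1$, so Theorem \ref{thm:fk06} gives equality directly with $\a$ trivial, and one checks the remaining classes are multiples. Alternatively invoke Agol \cite{Ag12}.) Next, by Corollary \ref{cor:quasifi} choose a finite regular cover $p\colon M\to N$ such that $p^*\phi$ is quasi-fibered for every nonzero $\phi\in H^1(N;\R)$. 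I would arrange, by passing to a further regular cover if needed, that $b_1(M)>1$ — this can be forced because quasi-fibered classes lie on the closure of top-dimensional faces, so a manifold all of whose classes are quasi-fibered and which has $b_1=1$ must be fibered, and a fibered manifold has finite covers with larger $b_1$ unless it is very special (e.g. a torus bundle), a case again dispatched directly by Theorem \ref{thm:fk06}.

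Now apply Proposition \ref{prop:makeequal} to $M$: there is a character $\rho\colon\pi_1(M)\to U(1)$ factoring through a finite group with $y_M(\psi)=\deg(\tau(M,\psi,\rho))$ for all $\psi\in H^1(M;\Z)$. Let $\a\colon\pi_1(N)\to U(k)$ be the extended finite character induced by $\rho$, where $k=[\,N:M\,]$; by Lemma \ref{lem:induced} this is unitary and factors through a finite group. The key chain of (in)equalities, for any nonzero $\phi\in H^1(N;\Z)$, runs: by Lemma \ref{lem:shapironorm}, $\tau(N,\phi,\a)=\tau(M,p^*\phi,\rho)$, so $\deg(\tau(N,\phi,\a))=\deg(\tau(M,p^*\phi,\rho))=y_M(p^*\phi)$. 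Since $p^*\phi$ is quasi-fibered and norms are continuous (the twisted Alexander norm agrees with $k'\cdot x$ on a fibered face by Theorem \ref{thm:fk08}, hence on its closure), one gets $y_M(p^*\phi)=x_M(p^*\phi)$ — here I would use that on the closure of a fibered cone the Thurston norm and the (untwisted) Alexander norm coincide, which follows from Theorem \ref{thm:fk08} for fibered classes plus continuity and convexity of both norms. Finally $x_M(p^*\phi)=k\cdot x_N(\phi)$ by formula (\ref{equ:tnfinitecover}). Dividing by $k$ gives $\frac1k\deg(\tau(N,\phi,\a))=x_N(\phi)$, as desired.

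The main obstacle I anticipate is the step asserting $y_M(p^*\phi)=x_M(p^*\phi)$ for \emph{quasi-fibered} (not just fibered) classes: Theorem \ref{thm:fk08} states equality only for genuinely fibered classes, so I must upgrade it to the closure of a fibered cone. The argument is that both $y_M$ and $x_M$ are continuous seminorms agreeing on an open cone (the fibered classes filling the open face $F$), hence they agree on the closure of that cone; this needs the fact — standard but worth citing — that the restriction of the Thurston norm to a single top-dimensional face is linear, so that continuity upgrades pointwise equality on a dense subset of the closed cone to equality on all of it. A secondary technical point is ensuring the various finite covers can be taken regular simultaneously and that one can force $b_1(M)>1$; both are routine but should be stated carefully, together with the handful of exceptional small-$b_1$ manifolds (torus bundles, the once-punctured-torus bundles) where one argues directly from Theorem \ref{thm:fk06} with the trivial representation.
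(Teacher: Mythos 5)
Your proof is essentially identical to the paper's: establish that $\pi_1(N)$ is virtually RFRS (via Theorems \ref{thm:rfrs}, \ref{thm:wise}, \ref{thm:liu}, \ref{thm:pw12}), apply Corollary \ref{cor:quasifi} to obtain a regular finite cover $M$ in which all pulled-back classes are quasi-fibered, use Theorem \ref{thm:fk08} plus continuity to get $y_M(p^*\phi)=x_M(p^*\phi)$, invoke Proposition \ref{prop:makeequal} to produce a finite character $\rho$ on $\pi_1(M)$ matching $y_M$ with $\deg\tau(M,\cdot,\rho)$, and descend to $N$ via the extended character, Lemma \ref{lem:shapironorm}, the covering formula (\ref{equ:tnfinitecover}), and the upper bound from Theorem \ref{thm:fk06}. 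The one place you go beyond the paper is in explicitly worrying about $b_1(M)=1$; the cleanest resolution there is not the largeness/further-cover argument you sketch but simply to observe that if every class in $H^1(M;\R)$ is quasi-fibered and $b_1(M)=1$, then $M$ is fibered, hence so is $N$ (with $b_1(N)=1$), and then the trivial representation already gives the required equality directly from Theorem \ref{thm:fk06}.
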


\begin{proof}
Let $N$ be an irreducible 3-manifold which is not a closed graph manifold.
If $b_1(N)=0$, then there is nothing to prove.
If  $N$ is a closed hyperbolic fibered 3-manifold with $b_1(N)=1$,
then the trivial representation has the required property.
For all the remaining cases, it follows from Theorems \ref{thm:rfrs}, \ref{thm:wise}, \ref{thm:liu} and \ref{thm:pw12} that $\pi_1(N)$ is virtually RFRS.
By Corollary \ref{cor:quasifi} there
exists a regular finite cover $p\colon M\to N$ such that $p^*\phi$ is quasi--fibered for any $\phi \in H^1(N;\R)$.
We denote by $k$ the order of the cover.
It follows from Theorem \ref{thm:fk08} and from the continuity of $y_{M}$ and $x_{M}$ that
$ y_{M}(\psi)=x_{M}(\psi)$
for any quasi--fibered class $\psi\in H^1(M;\R)$. In particular we obtain  that
\be\label{equ:1} y_{M}(p^*\phi)=x_{M}(p^*\phi) \mbox{ for any } \phi\in H^1(N;\R).\ee

By Proposition  \ref{prop:makeequal}  there exists a character $\rho\colon \pi_1(M)\to U(1)$ which factors through a finite group such that
\be \label{equ:2} y_{M}(\psi)=\deg(\tau(M,\psi,\rho))\ee
for any $\psi \in H^1(M;\Z)$.
We now denote by $\a\colon \pi_1(N)\to U(k)$ the corresponding extended  finite character.

Now let $\phi\in H^1(N;\Z)$.
Lemma
\ref{lem:shapironorm} implies
\be \label{equ:3} \tau(M,p^*\phi,\rho)=\tau(N,\phi,\a)\ee
It  now follows from   Equalities (\ref{equ:tnfinitecover}), (\ref{equ:1}), (\ref{equ:2}) and (\ref{equ:3}) and from Theorem \ref{thm:fk06} that
\[ \ba{rcl}x_N(\phi)&=&\frac{1}{k} x_{M}(p^*\phi)\\[2mm]
&=&\frac{1}{k}y_M(p^*\phi)\\[2mm]
&=&\frac{1}{k}\deg(\tau(M,p_*\phi,\rho))=\frac{1}{k} \deg(\tau(N,\phi,\a))\leq x_N(\phi).\ea\]
We conclude that
\[ \frac{1}{k} \deg(\tau(N,\phi,\a))= x_N(\phi)\mbox{ any $\phi \in H^1(N;\Z)$.}\]
\end{proof}

We can also reinterpret  Theorem \ref{mainthm} in terms of twisted Alexander norms:

\begin{corollary} \label{maincorextnorm}\label{cor:twinorm}
Let $N$ be an irreducible 3-manifold which is not a closed graph manifold.
Then there exists an extended finite character $\a:\pi_1(N)\to U(k)$  such that
\[ \frac{1}{k}y_N^\a(\phi)=x_N(\phi)  \mbox{ for any }\phi\in H^1(N;\R).\]
  \end{corollary}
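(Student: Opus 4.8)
The plan is to deduce Corollary \ref{cor:twinorm} from Theorem \ref{mainthmext} (equivalently Theorem \ref{mainthm}) together with Lemma \ref{lem:taunorm} and Theorem \ref{thm:fk08}, rather than reproving anything from scratch. The point is that Theorem \ref{mainthmext} already furnishes an extended finite character $\a\colon\pi_1(N)\to U(k)$ with $\frac1k\deg(\tau(N,\phi,\a))=x_N(\phi)$ for all integral $\phi$, and we want to upgrade the equality from $\deg\tau$ to the twisted Alexander norm $y_N^\a$, and from integral to real classes.

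First I would dispose of the low--rank cases exactly as in the proof of Theorem \ref{mainthmext}: if $b_1(N)\le 1$ the statement about $y_N^\a$ is either vacuous ($b_1(N)=0$, both sides zero) or needs only the remark that when $b_1(N)=1$ the norm $x_N$ and the relevant degree agree on the (single) generator by Theorem \ref{thm:fk06}, so one can just take the character produced by Theorem \ref{mainthmext}; note $y_N^\a$ is only defined for $b_1(N)>1$, so strictly the content is in the case $b_1(N)>1$. So assume $b_1(N)>1$ and let $\a$ be the extended finite character from Theorem \ref{mainthmext}. For every integral $\phi\in H^1(N;\Z)$ we then have, combining Lemma \ref{lem:taunorm} (which gives $\deg(\tau(N,\phi,\a))\le y_N^\a(\phi)$) with the first part of Theorem \ref{thm:fk08} (which gives $\frac1k y_N^\a(\phi)\le x_N(\phi)$) and with the equality $\frac1k\deg(\tau(N,\phi,\a))=x_N(\phi)$ from Theorem \ref{mainthmext}, the chain
\[
x_N(\phi)=\tfrac1k\deg(\tau(N,\phi,\a))\le \tfrac1k y_N^\a(\phi)\le x_N(\phi),
\]
forcing $\frac1k y_N^\a(\phi)=x_N(\phi)$ for all integral $\phi$. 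Since both $y_N^\a$ and $x_N$ are (semi)norms on the finite--dimensional real vector space $H^1(N;\R)$ and therefore continuous, and since $H^1(N;\Z)$ spans $H^1(N;\R)$ (in fact $H^1(N;\Q)$ is dense), the equality propagates from integral classes to all of $H^1(N;\R)$, which is the assertion of the corollary.

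I do not expect a serious obstacle here; the only point to be a little careful about is the bookkeeping of which inequality comes from where — in particular that Lemma \ref{lem:taunorm} requires $b_1(N)>1$, which is exactly the regime in which $y_N^\a$ is defined, so there is no gap. One might also note that the same $\a$ works because Theorem \ref{mainthmext} and Lemma \ref{lem:taunorm}/Theorem \ref{thm:fk08} all apply to an arbitrary representation; no new choice of cover or character is needed. The density/continuity step is routine: given any $\phi\in H^1(N;\R)$, approximate it by rational classes $\phi_n\to\phi$, clear denominators so that each $m_n\phi_n$ is integral, apply the integral case and homogeneity of both norms, and pass to the limit. This completes the proof modulo those standard verifications.

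\begin{proof}
If $b_1(N)\le 1$ there is nothing to prove (the twisted Alexander norm $y_N^\a$ is only defined when $b_1(N)>1$), so assume $b_1(N)>1$. By Theorem \ref{mainthmext} there exists an extended finite character $\a\colon\pi_1(N)\to U(k)$ with
\[
\tfrac1k\deg(\tau(N,\phi,\a))=x_N(\phi)\quad\text{for all }\phi\in H^1(N;\Z).
\]
Let $\phi\in H^1(N;\Z)$. By Lemma \ref{lem:taunorm} we have $\deg(\tau(N,\phi,\a))\le y_N^\a(\phi)$, and by the first part of Theorem \ref{thm:fk08} we have $\frac1k y_N^\a(\phi)\le x_N(\phi)$. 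Combining these with the displayed equality we obtain
\[
x_N(\phi)=\tfrac1k\deg(\tau(N,\phi,\a))\le \tfrac1k y_N^\a(\phi)\le x_N(\phi),
\]
so $\frac1k y_N^\a(\phi)=x_N(\phi)$ for every $\phi\in H^1(N;\Z)$. Both $\frac1k y_N^\a$ and $x_N$ are seminorms on $H^1(N;\R)$, hence continuous, and they agree on the lattice $H^1(N;\Z)$; by homogeneity they agree on $H^1(N;\Q)$, and by continuity and density of $H^1(N;\Q)$ in $H^1(N;\R)$ they agree on all of $H^1(N;\R)$. This proves the corollary.
\end{proof}
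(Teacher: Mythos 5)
Your proof is correct and follows essentially the same route as the paper: take the $\a$ furnished by Theorem~\ref{mainthmext}, sandwich $\frac1k y_N^\a$ between $\frac1k\deg\tau$ and $x_N$ via Lemma~\ref{lem:taunorm} and Theorem~\ref{thm:fk08}, and extend from integral to real classes by homogeneity and continuity of seminorms. The only difference is your explicit handling of the $b_1(N)\le 1$ case, which the paper leaves implicit.
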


\begin{proof}
Let $N$ be an irreducible 3-manifold which is not a closed graph manifold.
By Theorem \ref{mainthmext} there   exists an extended finite character  $\a:\pi_1(N)\to U(k)$  such that
\[ \frac{1}{k} \deg(\tau(N,\phi,\a))=x_N(\phi) \mbox{ for any }\phi\in H^1(N;\Z).\]
Lemma \ref{lem:taunorm} and Theorem \ref{thm:fk08} imply that
\[ x_N(\phi)=\frac{1}{k} \deg(\tau(N,\phi,\a))\leq \frac{1}{k}  y_N^\a(\phi)\leq x_N(\phi),\]
hence for any $\phi\in H^1(N;\Z)$ we have
\[ \frac{1}{k}y_N^\a(\phi)=x_N(\phi) .\]
It now follows from the linearity and the continuity of the norms $y_N^\a$ and $x_N$
that this equality holds in fact for all real classes.
\end{proof}

\section{An algorithm for determining the Thurston norm}\label{section:appl}

Let $N$ be an irreducible 3-manifold which is not a closed graph manifold.
We will now show the following:
\bn
\item[(A)]  Given  $\phi\in H^1(N;\Z)$ Theorem \ref{mainthmext} gives rise to an algorithm $A$ which determines the Thurston norm
of $\phi$.
\item[(B)] Corollary \ref{maincorextnorm} gives rise to an algorithm $B$ which determines the Thurston norm of $N$.
\en
Since the former algorithm is much easier to explain we treat it separately, even though of course the second algorithm is stronger than the first algorithm.
We explain the algorithms in a somewhat informal way, we leave it to the reader to formulate a completely formal algorithm.

\subsection{Extended finite characters}

Let $\pi$ be a finitely presented group. We can then systematically go through all homomorphisms from $\pi$ to all permutation groups.
Since every finite group is a subgroup of a permutation group we can thus go through all epimorphisms to finite group.
For each epimorphism $\a\colon \pi\to G$ onto a finite group we can determine $H_1(\ker(\a);\Z)$ using the Reidemeister--Schreier method.
It is now straightforward to see that one can systematically find a sequence of extended finite characters, such that up to conjugation
every finite extended character will eventually appear.

Put differently,  we can inductively define extended finite characters $\a_i$, $i\in \N$,
such that given any extended finite character $\a$ there exists an $i$, such that $\a$ and $\a_i$ are conjugate.

\subsection{Algorithm $A$}
Let $N$ be an irreducible 3-manifold which is not a closed graph manifold and let $\phi\in H^1(N;\Z)$.

The algorithm $A$  consists of two programs $P(\phi)$ and $Q(\phi)$ running at the same time:
\bn
\item For $i=1,2,3,\dots$ Program $P(\phi)$ computes $\tau(N,\phi,\a_i)\in \C(t)$.
Note that $\tau(N,\phi,\a_i)$ can be calculated efficiently using Fox calculus (see e.g. \cite{FK06}).
\item Program $Q(\phi)$  lists all properly embedded surfaces dual to $\phi$ up to isotopy and computes their complexities.
Such a program can for example be written using normal surfaces (see e.g. \cite{CT09}).
\en
It follows from Theorem \ref{mainthmext} that after finitely many steps the lower bound on the Thurston norm coming from $P(\phi)$ agrees with the upper bound on the Thurston norm coming from $Q(\phi)$.

This algorithm sounds inefficient, but the calculations in \cite{FK06} and \cite{DFJ11}
show that in practice twisted Alexander polynomials are very efficient at determining the Thurston norm for a given class $\phi$.

\subsection{Algorithm $B$}

Before we explain the algorithm for determining the Thurston norm ball we state the following elementary lemma:

\begin{lemma}\label{lem:samenorm}
Let $x$ and $y$ be seminorms on a real vector space $V$ with $y\leq x$.
Denote by $B$ and $C$ the norm balls of $x$ and $y$.
Let $\phi_1,\dots,\phi_k$ be a set of non--zero classes in $V$ such that each open cone on a top dimensional face of $C$ contains at least
one $\phi_i$.
If $x(\phi_i)=y(\phi_i)$ for $i=1,\dots,k$, then $x=y$ for all $\phi\in V$.
\end{lemma}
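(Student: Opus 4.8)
The plan is to exploit the fact that a seminorm on a finite-dimensional real vector space is completely determined by its restriction to the closure of a single top-dimensional open cone over a face of its unit ball, because the norm is linear on each such cone. First I would recall the basic structure: since $C$ is the unit ball of the seminorm $y$, it is a (possibly non-compact) convex polyhedron, and $V$ is the union of the rays through $0$ together with the closed cones over the top-dimensional faces of $C$; on the cone over a fixed top-dimensional face $G$ of $C$, the seminorm $y$ agrees with a fixed linear functional $\ell_G$ (namely the supporting functional of $G$), and moreover $C\cap(\text{cone over }G)$ is exactly the region where $\ell_G \le 1$. The same holds for $x$ with its own ball $B$, but this is exactly what we need to pin down.

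The key steps, in order, are as follows. (1) Fix a top-dimensional open face $G$ of $C$ and let $\phi_i$ be one of the given classes lying in the open cone over $G$; let $\ell = \ell_G$ be the linear functional with $y = \ell$ on the closed cone over $G$. (2) Since $y \le x$ and $x(\phi_i) = y(\phi_i) = \ell(\phi_i)$, and since $\phi_i$ lies in the interior of the cone over $G$, I would argue that $x$ must also be linear and equal to $\ell$ on a neighborhood of $\phi_i$, hence on the whole closed cone over $G$: indeed, convexity of $x$ together with $x \ge y = \ell$ and $x(\phi_i) = \ell(\phi_i)$ forces $x$ to coincide with its own supporting functional at $\phi_i$, and that supporting functional must be $\ell$ on the cone since it is squeezed between $\ell$ and $x$. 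Concretely, for any $\phi$ in the open cone over $G$, write $\phi_i$ as a convex combination (up to positive scaling) of $\phi$ and another point of that cone; then $x(\phi_i) \le$ the corresponding combination of $x$-values, while $x \ge \ell$ and $\ell$ is additive along the cone, which pins $x(\phi) = \ell(\phi)$. (3) Conclude $x = y$ on the closed cone over $G$. (4) Run this argument over each top-dimensional face of $C$; by hypothesis every such open cone contains some $\phi_i$, so $x = y$ on the union of all these closed cones. (5) Observe that this union is all of $V$ except possibly lower-dimensional strata (rays through vertices/edges, and in the non-compact case also recession directions), and since both $x$ and $y$ are continuous and the union of the top-dimensional closed cones is dense in $V$, we get $x = y$ everywhere on $V$.

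The main obstacle I anticipate is step (2) — making precise the squeezing argument that $y \le x$, $x(\phi_i)=y(\phi_i)$, and $\phi_i$ being in the \emph{open} cone together force $x$ to be linear (equal to $\ell$) throughout that cone. The cleanest way is probably: by the Hahn–Banach/supporting-hyperplane argument there is a linear functional $m$ with $m \le x$ everywhere and $m(\phi_i) = x(\phi_i)$; then $\ell \le m$ is false in general, but on the cone over $G$ we have $y = \ell$ and $x \le$ (its homogeneous extension) so one shows $m = \ell$ on the cone by comparing at the interior point $\phi_i$ and using that $\ell$ already attains $x(\phi_i)$ from below via $y$. A secondary technical point is the non-compact case: if $C$ is unbounded, a "top-dimensional face" may be unbounded and one must be careful that the recession cone does not carry extra information — but since $x \ge y$ and $y$ vanishes (or is linear) on recession directions in a controlled way, and continuity handles the boundary strata, this does not cause real trouble. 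I would phrase the whole argument to avoid invoking the polyhedral structure of $B$ at all, using only that of $C$, since that is the ball over whose faces we have chosen test classes.
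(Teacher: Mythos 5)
The paper states Lemma \ref{lem:samenorm} without proof (it is introduced as an ``elementary lemma''), so there is no paper argument to compare against; I will assess your argument on its own terms. Your step (2) contains a genuine gap, and in fact the lemma as literally stated appears to be false. In step (2) you want to conclude $x=\ell$ on the closed cone over $G$ from the three facts that $\ell=y\le x$ on that cone, that $x$ is convex, and that $x(\phi_i)=\ell(\phi_i)$ at the interior point $\phi_i$. Writing $\phi_i=t\phi+(1-t)\psi$ with $\phi,\psi$ in the cone, convexity gives $x(\phi_i)\le t\,x(\phi)+(1-t)\,x(\psi)$, and linearity of $\ell$ gives $\ell(\phi_i)=t\,\ell(\phi)+(1-t)\,\ell(\psi)$; substituting $x(\phi_i)=\ell(\phi_i)$ yields only $t\,\ell(\phi)+(1-t)\,\ell(\psi)\le t\,x(\phi)+(1-t)\,x(\psi)$, which is already implied by $\ell\le x$ and pins nothing. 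The inequality you would need, $x(\phi_i)\ge t\,x(\phi)+(1-t)\,x(\psi)$, is the \emph{reverse} of what convexity supplies. The squeezing argument you describe is valid when the linear function bounds the seminorm from \emph{above} and agrees with it at an interior point — i.e.\ it would apply to $y$ restricted to a face-cone of $B$ (where $x$ is linear and $y\le x$), not to $x$ on a face-cone of $C$.

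Indeed, take $V=\R^2$, $y(a,b)=\max(|a|,|b|)$, $x(a,b)=|a|+|b|$. Then $y\le x$, $C$ is the unit square, $B$ is the unit diamond, and $B\subsetneq C$. The four open cones over the edges of $C$ contain $(1,0),(0,1),(-1,0),(0,-1)$ respectively, and on each of these $x=y=1$, so all hypotheses of the lemma hold with $k=4$; yet $x\ne y$. This is exactly where step (2) breaks: $(1,0)$ is a \emph{vertex} of $B$, so $x$ is not linear near it and nothing forces $x$ to agree with $\ell$ on the rest of the cone. A version of the lemma that is true, and for which your style of argument works, replaces ``open cone over a top-dimensional face of $C$'' either by ``open cone over a top-dimensional face of $B$'' (on whose cone $x$ is genuinely linear, so that $y\le x$ linear, equal at an interior point, forces $y\equiv x$ there), or by ``ray through each vertex of $C$'' (so that $B$, being convex, contained in $C$, and touching $\partial C$ at every vertex of $C$, must equal $C$). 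You should also flag this as an issue for the algorithm in Section \ref{section:appl}, which invokes the lemma in precisely the face-of-$C$ form.
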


Let $N$ be an irreducible, triangulated 3-manifold which is not a closed graph manifold.
Recall that we denote the Thurston norm ball of $N$ by
\[ B(N):=\{ \phi \in H^1(N;\Q)\,|\, x_N(\phi)\leq 1\}.\]
Given a representation $\a\colon \pi_1(N)\to \gl(k,\C)$ we also write
\[ B(N,\a):=\{ \phi\in H^1(N;\Q)\,|\, \frac{1}{k}y_{N}^\a(\phi)\leq 1\}.\]
Note that Theorem \ref{thm:fk08} says that $B(N,\a)\subset B(N)$.
Also note that $B(N,\a)$ is the dual to the Newton polygon of $\tau(N,\a)$, in particular the vertices of $B(N,\a)$
can be determined easily using $\tau(N,\a)$.

We finally  write $\Phi:=\emptyset \subset H^1(N;\Z)$ and we denote by $z$ the zero norm.
The algorithm now consists of two programs running at the same time:
\bn
\item If $\phi_1,\dots,\phi_k$ denote the elements of $\Phi$, then we apply algorithms $Q(\phi_1),\dots,Q(\phi_k)$ from the previous section and we compute the complexities of the corresponding surfaces.
\item For $i=1,2,\dots$ Program $P$ computes $\tau(N,\a_i)$ and determines $B(N,\a_i)$.
If
\[ B(N,\a_j)\subsetneq B(N,\a_i) \mbox{ for }j=1,\dots,i-1,\]
then
we denote by $z$ the norm $y_{N}^{\a_i}$ and for each open cone on a top dimensional face of $B(N,\a_i)$ we pick a class in $H^1(N;\Z)$ and we denote the resulting set by $\Phi$ and we restart (1).
\en
We terminate the two programs when the complexities of $C(\phi_1),\dots,C(\phi_k)$ agree with $z(\phi_1),\dots,z(\phi_k)$.
It then follows from Lemma \ref{lem:samenorm} that $z$ equals the Thurston norm on $N$.

It remains to show that this algorithm terminates after finitely many steps.
First note that  by  Corollary \ref{maincorextnorm} there exists an $i$ such that
$B(N,\a_i)=B(N)$. After finitely many further steps the first program will find Thurston norm surfaces
representing $\phi_1,\dots,\phi_k$. The program terminates at this point.



\begin{thebibliography}{10}

\bibitem[Ag08]{Ag08}
I. Agol, {\em Criteria for virtual fibering}, Journal of Topology 1: 269-284 (2008)
\bibitem[Ag12]{Ag12}
I. Agol, {\em The virtual Haken conjecture}, with an appendix by I. Agol, D. Groves, J. Manning, preprint (2012).




\bibitem[Bo86]{Bo86}
F. Bonahon, {\em Bouts des vari\'et\'es hyperboliques de dimension 3}, Ann. of Math. (2), 124
(1986), 71-158.

\bibitem[Br94]{Br94}
K. S. Brown, {\em  Cohomology of groups}, Graduate Texts in Mathematics, 87. Springer-Verlag, New
York, 1994.




\bibitem[Ch03]{Ch03} J. Cha, {\em Fibred knots and twisted Alexander invariants},
Transactions of the AMS 355: 4187--4200 (2003)



\bibitem[CT09]{CT09}
S. Tillmann and D. Cooper, {\em
The Thurston norm via normal surfaces}, Pacific Journal of Mathematics 239 (2009) 1--15.
\bibitem[DFJ11]{DFJ11} N. Dunfield, S. Friedl and N. Jackson, {\em  Twisted Alexander polynomials of hyperbolic knots}, preprint (2011),
 to appear in Experimental Mathematics
\bibitem[FK06]{FK06} S. Friedl and T. Kim,
{\em The Thurston norm, fibered manifolds and twisted Alexander polynomials}, Topology 45, 929--953 (2006)
\bibitem[FK08]{FK08} S. Friedl and T. Kim, {\em  Twisted Alexander norms give lower
bounds on the Thurston norm}, Trans. Amer. Math. Soc. 360 (2008), 4597-4618

\bibitem[FSW12]{FSW12}
S. Friedl, S. Williams and D. Silver, {\em Complexities in groups, 2--complexes and 3-manifolds}, in preparation (2012)


\bibitem[FV08]{FV08}
S. Friedl and  S. Vidussi, {\em Twisted Alexander polynomials and
symplectic structures}, Amer. J. Math. 130, no 2: 455-- 484 (2008)

 \bibitem[FV10]{FV10}
S. Friedl and  S. Vidussi,
 {\em A survey of twisted Alexander polynomials}, The Mathematics of Knots: Theory and Application (Contributions in Mathematical and Computational Sciences), editors: Markus Banagl and Denis Vogel (2010), p. 45-94.


  \bibitem[FV11a]{FV11a}
S. Friedl and  S. Vidussi,
 {\em Twisted Alexander polynomials detect fibered 3-manifolds}, Annals of Math. 173 (2011), 1587-1643

  \bibitem[FV11b]{FV11b}
S. Friedl and  S. Vidussi,
 {\em Twisted Alexander polynomials and fibered 3-manifolds},
Low-Dimensional and Symplectic Topology, Proc. Sympos.  Pure Math. 82 (2011), 111-130.

 \bibitem[FV11c]{FV11c}
S. Friedl and  S. Vidussi,
 {\em A vanishing theorem for twisted Alexander polynomials with applications to symplectic 4-manifolds}, Preprint (2011)


 \bibitem[FV11d]{FV11d}
S. Friedl and  S. Vidussi,
 {\em  Minimal Genus on 4-manifolds with a Free Circle Action}, preprint (2011)


 \bibitem[Ga83]{Ga83} D. Gabai, {\em Foliations and the topology of 3-manifolds}, J. Differential Geometry 18, no. 3:
445--503 (1983)

\bibitem[GKM05]{GKM05} H. Goda, T. Kitano and  T. Morifuji, {\em Reidemeister Torsion, Twisted Alexander Polynomial and Fibred Knots}, Comment. Math. Helv.  80,  no. 1: 51--61 (2005)
\bibitem[GM03]{GM03} H. Goda and  T. Morifuji, {\em Twisted Alexander polynomial for ${\rm SL}(2,\Bbb C)$-representations and fibered knots},
C. R. Math. Acad. Sci. Soc. R. Can.  25  (2003),  no. 4, 97--101


\bibitem[HW08]{HW08}
F. Haglund and D. Wise, {\em Special cube complexes},
 Geom. Funct. Anal. 17, No. 5, 1551-1620 (2008)



\bibitem[KM12]{KM12}
J. Kahn and V. Markovic, {\em Immersing almost geodesic surfaces in a closed hyperbolic three manifold},
Ann. of Math. 175 (2012), 1127--1190.

\bibitem[KL99]{KL99}
P. Kirk and C. Livingston, {\em Twisted Alexander invariants,
Reidemeister torsion and Casson--Gordon invariants}, Topology
\textbf{38} (1999), no. 3, 635--661.
\bibitem[Kio96]{Kio96}
T. Kitano, {\em Twisted Alexander polynomials and Reidemeister
torsion}, Pacific J. Math. \textbf{174} (1996), no. 2, 431--442.
\bibitem[Kiy08]{Kiy08}
T. Kitayama, {\em Normalization of twisted Alexander invariants}, Preprint (2008)
\bibitem[Le95]{Le95}
B. Leeb, {\em $3$-manifolds with(out) metrics of nonpositive curvature},
Invent. Math. 122 (1995), 277--289.
\bibitem[Lin01]{Lin01} X. S. Lin, {\em Representations of knot groups and twisted
Alexander polynomials}, Acta Math. Sin. (Engl. Ser.)  17,  no. 3:
361--380 (2001)


\bibitem[Liu11]{Liu11}
Y. Liu, {\em Virtual cubulation of
nonpositively curved graph manifolds}, Preprint (2011)


\bibitem[Mc02]{Mc02} C. T. McMullen, {\em The Alexander polynomial of a 3-manifold and the Thurston
norm on cohomology}, Ann. Sci. Ecole Norm. Sup. (4) 35, no. 2: 153-171 (2002)
\bibitem[Mi66]{Mi66}
J. Milnor, {\em Whitehead torsion}, Bull. Amer. Math. Soc. 72 (1966), 358--426.

\bibitem[Nic03]{Nic03}
L. Nicolaescu, {\em The Reidemeister torsion of 3-manifolds}, de Gruyter Studies in Mathematics, 30. Walter de Gruyter \& Co., Berlin, 2003.
\bibitem[Pa07]{Pa07}
A. Pajitnov, {\em Novikov homology, twisted Alexander polynomials, and Thurston cones},  St. Petersburg Math. J.  18  (2007),  no. 5, 809--835


\bibitem[PW11]{PW11}
 P. Przytycki and D. Wise, {\em Graph manifolds with boundary are virtually special}, Preprint (2011)

\bibitem[PW12]{PW12}
 P. Przytycki and D. Wise, {\em Mixed 3-manifolds are virtually special}, Preprint (2012).

\bibitem[St62]{St62}
J. Stallings, {\em On fibering certain 3-manifolds}, 1962 Topology of 3-manifolds and related
topics (Proc. The Univ. of Georgia Institute, 1961) pp. 95--100 Prentice-Hall, Englewood Cliffs,
N.J. (1962)

\bibitem[Ti70]{Ti70}
D. Tischler, {\em On fibering certain foliated manifolds over $S\sp{1}$}, Topology 9, 153-154 (1970)
\bibitem[Th86]{Th86} W. P. Thurston, {\em A norm for the homology of 3-manifolds}, Mem.
Amer. Math. Soc. 339: 99--130 (1986).
\bibitem[Tu86]{Tu86} V. Turaev, {\em
Reidemeister torsion in knot theory},
 Russian Math. Surveys 41 (1986), no. 1, 119--182.
\bibitem[Tu01]{Tu01} V. Turaev, {\em Introduction to combinatorial torsions}, Birkh\"auser, Basel, (2001)
\bibitem[Tu02a]{Tu02a} V. Turaev, {\em Torsions of 3-manifolds}, Progress in Mathematics, 208. Birkh\"auser
Verlag, Basel, 2002.
\bibitem[Tu02b]{Tu02b}
V. Turaev, {\em A norm for the cohomology of 2-complexes}, Algebraic \& Geometric Topology
 2 (2002) 137--155
\bibitem[Wa94]{Wa94}
M. Wada, {\em Twisted Alexander polynomial for finitely presentable groups}, Topology 33, no. 2:
241--256 (1994)

\bibitem[Wi09]{Wi09}
D. Wise, {\em The structure of groups with a quasiconvex hierarchy}, Electronic Res. Ann. Math. Sci.,
Volume 16, Pages 44-55 (2009)
\bibitem[Wi11]{Wi11}
D. Wise, {\em From riches to RAAGs: 3-manifolds, right--angled Artin groups, and cubical geometry},
lecture notes for the NSF-CBMS conference August 1-5, 2011 held at the CUNY Graduate Center, 78 pages (2011)

\bibitem[Wi12]{Wi12}
D. Wise, {\em The structure of groups with a quasiconvex hierarchy}, 181 pages, Preprint  (2012)\\
downloaded on April 16, 2012 from the webpage for the NSF-CBMS conference.

\end{thebibliography}
\end{document}